\documentclass[12pt]{amsart}
\usepackage{latexsym}
\usepackage{amsmath, amscd, amsfonts}
\usepackage[square, comma, sort&compress, numbers]{natbib}


%

{\theoremstyle{plain}
    \newtheorem{Thm}{\bf Theorem}[section]
    \newtheorem{Prop}[Thm]{\bf Proposition}
    
    \newtheorem{Lma}[Thm]{\bf Lemma}
    \newtheorem{Cor}[Thm]{\bf Corollary}
    \newtheorem{Q}[Thm]{\bf Question}

}
{\theoremstyle{remark}
    \newtheorem{Rem}[Thm]{\bf Remark}
    \newtheorem{Exa}[Thm]{\bf Example}
}
{\theoremstyle{definition}
    \newtheorem{Def}[Thm]{\bf Definition}
}
\numberwithin{equation}{section}

\DeclareFontFamily{OMS}{rsfs}{\skewchar\font'60}
\DeclareFontShape{OMS}{rsfs}{m}{n}{<-5>rsfs5 <5-7>rsfs7 <7->rsfs10 }{}
\DeclareSymbolFont{rsfs}{OMS}{rsfs}{m}{n}
\DeclareSymbolFontAlphabet{\scr}{rsfs}

\newcommand{\sC}{\scr{C}}
\newcommand{\sD}{\scr{D}}

\setcounter{tocdepth}{3}
\newcommand{\mlabel}[1]%
  {\mbox{}\marginpar{\raggedleft\hspace{0pt}{\rm\ttfamily#1}}\label{#1}}

\newcommand{\length}{\operatorname{\lambda}}

\newcommand{\Hom}[3]{\operatorname{Hom}_{#1}(#2,#3){}}

\newcommand{\fm}{{\mathfrak m}}

\newcommand{\fa}{{\mathfrak a}}

\newcommand{\fC}{{\mathcal C}}

\newcommand{\ringR}{\text{$(R,\fm,k)$ }}

\newcommand{\brq}{^{[q]}}

\newcounter{hours}\newcounter{minutes}

\setlength{\topmargin}{-0.1in}
\setlength{\oddsidemargin}{0.0in}
\setlength{\evensidemargin}{0.0in}
\setlength{\textwidth}{6.5in}  
\setlength{\textheight}{8.9in}  

\newcommand{\excise}[1]{}
\begin{document}

\title{\bf Strong test ideals associated to Cartier algebras}

\author[F.~Enescu, I.~Ilioaea]{Florian Enescu, Irina Ilioaea}

\address{Department of Mathematics and Statistics, Georgia State University, Atlanta GA 30303}
\email{fenescu@gsu.edu, iilioaea1@gsu.edu}
\thanks{2010 {\em Mathematics Subject Classification\/}: 13A35}
\thanks{The first author was partially supported by the NSA grant H98230-12-1-0206.}
\date{}


\begin{abstract}
In this note, we use the theory of test ideals and Cartier algebras to examine the interplay between the tight and integral closures in a local ring of positive characteristic. Using work of Schwede, we prove the abundance of strong test ideals, recovering some older fundamental results, and use this approach in concrete computations. In the second part of the paper, the case of Stanley-Reisner rings is fully examined.
\end{abstract}
\maketitle

\section{Introduction}

In this note $\ringR$ denotes a local F-finite reduced ring of prime positive characteristic $p$. For a subset $I \subseteq R$ the notation $I \leq R$ will mean that $I$ is an ideal of $R$. Let $R^o$ denote the complement of the union of the minimal primes of $R$. Let $F^e : R \to R$ denote the $e$th iteration of the Frobenius map, that is, $F^e(r)=r^q$, where $q=p^e, e\in \mathbb{N}$. This defines a new $R$-module structure on $R$ denoted here by $R^{(e)}$. For an $R$-module $M$, let $F^e_R(M) = R^{(e)} \otimes_R M$. For a submodule $N \subseteq M$ we denote $N\brq_M = Im(F^e_R(N) \to F^e_R(M))$ and for $x \in M$, we let $x^q$ denote the image of $1 \otimes x$ in $F^e_R(M)$.

\begin{Def}
Let $I \leq R$. Let $N$ be a submodule of an $R$-module $M$. Then the {\it tight closure} of $I$ is the ideal 

$$I^* = \{ x \in R : {\rm there \ exists} \ c\in R^o {\rm \ such \ that} \ cx^q \in I\brq, {\rm for \ all} \ q=p^e \gg0\},$$and the tight closure of $N$ is $M$ is

$$N^* = \{ x \in M : {\rm there \ exists} \ c\in R^o {\rm \ such \ that} \ cx^q \in N\brq _M, {\rm for \ all} \ q=p^e \gg0\}.$$

\end{Def}

Tight closure theory is an important contribution to commutative algebra that has reshaped the understanding of many classical results in this area and produced new and important developments. Despite the large amount of work produced in connection to it, it is generally accepted that computing the tight closure of a given ideal in a particular ring can be very difficult. 

For an ideal $I$ in $R$, it is well known that $I^* \subseteq \overline{I}$, where $\overline{I}$ denotes the integral closure of $I$.  The purpose of this note is to further the study of the relationship between these ideal closures., by considering the following question: What elements of the integral closure of an ideal belong to its tight closure? This line of investigation was initiated by Huneke, in~\cite{Hu}, who introduced the concept of strong test ideal in relation to it. This notion provides interesting concrete information about the object of our study, as we explain below.

\begin{Def}
Let $T$ be an ideal of $R$ such that $T \cap R^o \neq \emptyset$. Then $T$ is a {\it strong test ideal} for $R$ if and only if $TI^*= TI$, for all ideals $I \leq R$. 
\end{Def}

Huneke has observed that the minimal number of generators of a strong test ideal $T$ is an upper bound for the minimal degree of an integral dependence equation that an element $x \in I^*$ satisfies over $I$, see Theorem 2.1 in~\cite{Hu}.

Strong test ideals are closely connected to the concept of test ideal, which plays a prominent role in the tight closure theory. We recall the definition here.

\begin{Def}

The {\it big test ideal} is defined as $\tau_b(R) = \cap_{M} {\rm Ann}_R(0^*_M)$. The {\it finitistic test ideal} of $R$ is $\cap _{I \leq R} I : I^*$ and is denoted by $\tau_{fg}(R)$. 
\end{Def}

A longstanding conjecture of tight closure theory states that $\tau_b(R) =\tau_{fg}(R)$. Several people have studied this type of ideals, see~\cite{E, HaS, Hu, V}. In a significant development connecting these ideals to the strong test ideal, Vraciu has shown that,  for a ring satisfying our hypotheses, natural strong test ideals do exist. More precisely, Vraciu's Theorems 3.1 and 3.2 in~\cite{V} (together with Theorem 5.1 in~\cite{AE}) show that $\tau_b(R)$ and, if $R$ is complete, $\tau_{fg}(R)$ are strong test ideals. Therefore, there exists an uniform upper bound for the minimal degree of an integral dependence equation that an element $x \in I^*$ satisfies over $I$. Unfortunately, practical applications of Vracius's work are not immediate, since the big test ideal and the finitistic test ideal are hard to compute in concrete examples. 

The purpose of the note is to highlight a computational approach to the problem of identifying an uniform upper bound for the minimal degree of an integral dependence equation for tight closure elements via strong test ideals. In the process, we update the literature on strong test ideals by showing that work of Schwede gives an abundance of strong test ideals, and in particular, it recovers the fact that the big test ideal is a strong test ideal as a corollary. These ideas are then explored computationally with the help of an algorithm developed by Katzman and Schwede. A significant portion of the note is dedicated to settling the case of Stanley-Reisner rings. 

\section{Tight closure vs Integral closure}
\subsection{Integral dependence for tight closure elements.}
\begin{Def}
Let $R$ be a Noetherian ring and $n$ a positive integer. We say that $R$ is $n$-tight if, for every ideal $I$ and $x\in  I^*$, $x$ satisfies a degree $n$ integral dependence equation over $I$.  
\end{Def}

It is clear that a ring is $1$-tight if and only if it is weakly $F$-regular. Vraciu's Theorem~\ref{V} coupled with Theorem 2.1 in~\cite{Hu} shows that a local $F$-finite ring $R$ is $n$-tight, where $n$ is the minimal number of generators of the test ideal $\tau$ of $R$. However, the test ideal is in general difficult to compute. In our subsequent sections we provide an algorithmic approach to finding possibly smaller values of $n$ such that $R$ is $n$-tight than the minimal number of generators of the test ideal. The reader should note that we will show in our last section that if $R$ is a Stanley-Reisner ring associated to a simplicial complex $\Delta$, then $R$ is $f_{max}(\Delta)$-tight, where $f_{max}(\Delta)$ is the number of facets of $\Delta$.

Normality plays an important role in tight closure theory, and it is expected that the connection between the tight and integral closures of an ideal is more interesting in the presence of normality. Finding examples of normal rings that are $n$-tight for $n \geq 3$ is not difficult. For example, the cubical cone $R= k[[x,y,z]]/(x^3+y^3+z^3)$ has test ideal equal to $(x, y, z)$ and hence it is $3$-tight. Whether the cubical cone is $2$-tight is in fact an intriguing question, as at present we do not know any normal ring that is 2-tight, but not $1$-tight.

\begin{Q}
\label{cc}
Let $k$ be a field of prime characteristic and $$R= \frac{k[[x, y, z]]}{(x^3+y^3+z^3)}.$$ Is $R$ a $2$-tight ring?
\end{Q}

Regarding this question, it is interesting to note that in the cubical cone any element in the tight closure of an ideal $J$ generated by a system of parameters satisfies a degree two integral dependence relation over $J$. We will explain this statement next, but first we need to state a result due to Corso and Polini.

\begin{Prop}[\cite{CP}]
\label{cp}
Let $\ringR$ be a Cohen-Macaulay local ring, non regular, and $J$ be an ideal generated by a system of parameters in $R$. Let $I = J : \fm$. Then $I^2 = J\cdot I$.
\end{Prop}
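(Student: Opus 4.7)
The inclusion $J I \subseteq I^2$ is immediate since $J \subseteq I$, so all the content lies in the reverse inclusion. Because $R$ is not regular, $\fm$ cannot be generated by $d = \dim R$ elements, so $J \neq \fm$. This forces $I = J:\fm$ to be a proper ideal of $R$ (else $\fm \subseteq J$ and $R$ would be regular), hence $I \subseteq \fm$, and the defining relation yields $I\fm \subseteq J$. By the Cohen-Macaulay hypothesis we may write $J = (x_1, \ldots, x_d)$ with $x_1, \ldots, x_d$ a regular sequence. Lift a $k$-basis of $I/J = \Soc(R/J)$ to elements $s_1, \ldots, s_t \in I$, so that $I = J + (s_1, \ldots, s_t)$; then $I^2$ is generated by products of the form $x_i x_j$, $x_i s_k$, and $s_k s_l$. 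The first two families clearly lie in $JI$, so the problem reduces to showing $s_k s_l \in JI$ for every pair $(k, l)$.

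Each product $s_k s_l$ lies in $\fm \cdot I \subseteq J$, so we may write $s_k s_l = \sum_{i=1}^d a_i x_i$. Because $x_1, \ldots, x_d$ is a regular sequence, its only $R$-linear syzygies are Koszul relations, whose coefficients lie in $J \subseteq I$; the property that each $a_i$ lies in $I$ is therefore independent of the chosen representation, and $\sum a_i x_i$ belongs to $JI$ if and only if this property holds. It suffices then to verify $a_i \in I$ for every $i$. From the socle condition $s_l \fm \subseteq J$ we obtain
\[
(s_k s_l)\,\fm \;\subseteq\; s_k J \;\subseteq\; IJ \;=\; JI,
\]
and rewriting $s_k s_l \cdot m = \sum_i (a_i m) x_i$, the characterization above forces $a_i \fm \subseteq I$ for every $i$, that is, $a_i \in I:\fm$.

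The final step --- upgrading $a_i \in I:\fm = J:\fm^2$ to the required $a_i \in J:\fm = I$ --- is the main difficulty, because $(J:\fm^2)/I$ is in general nonzero. Closing this gap must exploit that $s_k$ and $s_l$ are specifically socle lifts rather than generic elements of $I$. My plan is to use the identification $J/JI \cong (R/I)^d$, another consequence of the Koszul structure on the regular sequence $x_1, \ldots, x_d$, together with the identity $J:I = \fm$ derived at the outset (which holds precisely because $R$ is non-regular). Under this identification $I^2/JI$ embeds into $\Soc(R/I)^d$, and the principal remaining obstacle is to show that this embedding is zero. I expect this to follow from a careful accounting of the symmetry $s_k s_l = s_l s_k$ and the compatibility relations imposed on the coefficient vectors $(a_i^{(k,l)})$ by the Artinian structure of $R/J$.
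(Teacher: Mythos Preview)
The paper does not supply a proof of this proposition at all: it is quoted from Corso--Polini \cite{CP} and used as a black box in the subsequent corollary. So there is no ``paper's proof'' to compare against, and any argument you give is necessarily your own.

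That said, what you have written is not a proof but a partial reduction followed by a plan. The reduction to showing $a_i\in I$ (equivalently, that the image of $I^2$ in $J/JI\cong (R/I)^d$ vanishes) is correct and standard, and you correctly observe that your intermediate bound $a_i\in I:\fm$ is strictly weaker. However, the closing paragraph is only a sketch: phrases like ``my plan is'' and ``I expect this to follow from a careful accounting of the symmetry $s_ks_l=s_ls_k$'' do not constitute an argument. There is a genuine gap here, and the symmetry heuristic alone is not enough --- the coefficient matrix $(a_i^{(k,l)})$ is automatically symmetric in $(k,l)$ modulo $J$ simply because $s_ks_l=s_ls_k$, and this by itself imposes no vanishing.

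More seriously, your argument up to the gap uses non-regularity only to obtain $I\subseteq\fm$. But $I\subseteq\fm$ holds in any local ring with $J\subsetneq\fm$, in particular in the regular ring $R=k[[x]]$ with $J=(x^2)$, where $I=(x)$, $I^2=(x^2)$, and $JI=(x^3)\subsetneq I^2$. In that example your coefficient is $a_1=1$, which indeed lies in $I:\fm=R$ but not in $I$. So the step you left open is precisely where non-regularity must do real work, and your outlined plan does not indicate how it will be invoked. The Corso--Polini argument uses that $\fm$ and $I$ are \emph{linked} through $J$ (that is, $J:I=\fm$ and $J:\fm=I$) together with properties specific to the link of the maximal ideal; you would need to bring in a comparable idea to close the argument.
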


\begin{Cor}
Let $\ringR$ be a local Gorenstein ring with test ideal equal to $\fm$. Then for any $x \in J^*$ we have that
$x^2 \in Jx +J^2.$
\end{Cor}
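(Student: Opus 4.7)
The plan is to combine the fact that the test ideal equals $\fm$ with the Corso--Polini proposition, exploiting the Gorenstein hypothesis to make the socle of $R/J$ one-dimensional. Throughout, $J$ is a parameter ideal, and in a Gorenstein non-regular ring the test ideal being $\fm$ forces $\fm \cdot I^* \subseteq I$ for every ideal $I$ (from the definition of the finitistic test ideal). Applying this with $I=J$ gives $\fm x \subseteq J$, so $x$ lies in $J : \fm$.

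Next, I would use Gorenstein-ness in a structural way: since $R/J$ is a zero-dimensional Gorenstein ring, its socle is one-dimensional, which translates to $J : \fm = J + (u)$ for a single element $u$ lifting the socle generator. Applying Proposition~\ref{cp} to $I = J:\fm = J+(u)$ yields $I^2 = J \cdot I$. Expanding, $I^2 = J^2 + Ju + (u^2)$ and $JI = J^2 + Ju$, so the content of Corso--Polini here is the single relation
\[
u^2 \in J^2 + Ju.
\]

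With this in hand, write $x = j_0 + \alpha u$ with $j_0 \in J$ and $\alpha \in R$, and expand
\[
x^2 = j_0^2 + 2 j_0 \alpha u + \alpha^2 u^2.
\]
The first two summands lie in $J^2 + J(\alpha u)$. For the third, multiply $u^2 = a + bu$ (with $a \in J^2$, $b \in J$) by $\alpha^2$ to get $\alpha^2 u^2 = \alpha^2 a + (\alpha b)(\alpha u) \in J^2 + J(\alpha u)$. Thus $x^2 \in J^2 + J(\alpha u)$. Finally observe that $Jx + J^2 = J(j_0 + \alpha u) + J^2 = J^2 + J(\alpha u)$, since $Jj_0 \subseteq J^2$. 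Putting these together gives $x^2 \in Jx + J^2$, as required.

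The only real obstacle is the very first step—translating the hypothesis ``test ideal equals $\fm$'' into $\fm J^* \subseteq J$—and making sure the version of the test ideal in play is the finitistic one (or, alternatively, invoking the fact that whatever flavor of test ideal $\tau$ satisfies $\tau I^* \subseteq I$ for all $I$). Once that is in place, the remainder is a bookkeeping exercise that rides entirely on Proposition~\ref{cp} and the one-dimensionality of the socle of $R/J$.
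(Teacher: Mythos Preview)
Your proof is correct and follows essentially the same line as the paper's: both use the test-ideal hypothesis to place $x$ in $J:\fm$, invoke the one-dimensional socle of $R/J$, and then apply Proposition~\ref{cp}. The only difference is cosmetic: the paper observes directly that when $x\notin J$ one has $J+xR = J:\fm$ (since $\lambda_R((J:\fm)/J)=1$), and then $(J+xR)^2 = J(J+xR)$ gives the conclusion in one line, whereas you introduce an auxiliary socle lift $u$, deduce $u^2\in Ju+J^2$, and then unwind $x=j_0+\alpha u$. Your remark that only the inclusion $\fm J^*\subseteq J$ is needed (rather than the full strong test ideal equality $\fm J^*=\fm J$ that the paper cites from Vraciu) is well taken.
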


\begin{proof}
Vraciu's result shows that, in a Gorenstein ring, the test ideal is a strong test ideal. So, $\fm \cdot x \subseteq J$ and so $J+ xR \subseteq J : \fm =I$. But $\length_R(I/J) =1$ since $R$ is Gorenstein, so $J +xR = I$, if $x \not\in J$.

For our statement, we can assume $x \not\in J$. Proposition~\ref{cp} implies at once that $(J+xR)^2 =I^2 = J \cdot I = J\cdot (J+xR),$ which gives the desired statement.

\end{proof}

\begin{Rem}
Since the cubical cone $R= k[[x,y,z]]/(x^3+y^3+z^3)$  is Gorenstein and has the test ideal equal to $(x, y, z)$, the above Corollary applies.
\end{Rem}

The absence of normality complicates the relationship between integral and tight closure. One should first investigate the notion we define below, before embarking on a study of $n$-tightness over non normal rings.

\begin{Def}
Let $R$ be a domain and $K$ its fraction field. We say that $R$ is $n$-{\it normal} if every element $x \in K$ integral over $R$ satisfies a degree $n$ integral dependence equation over $R$.
\end{Def}

There are domains that are not $n$-normal, for any $n \geq 1$. In dimension $1$, there is a simple connection between $n$-normality and $n$-tightness.

\begin{Prop}
Let $R$ be a local domain of dimension $1$ with infinite residue field. Fix $n$ a positive integer.  Then $R$ is $n$-tight if and only if $R$ is $n$-normal.
\end{Prop}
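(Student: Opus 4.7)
The plan is to reduce the statement to the identity $(z)^* = \overline{(z)}$ for principal ideals and then to use the standard dictionary between integral equations for elements of $K$ over $R$ and integral equations for elements of $R$ over principal ideals. Since $R$ is F-finite of dimension one, it is excellent, so the integral closure $R'$ of $R$ in $K$ is finite over $R$; in particular the conductor $(R :_R R')$ contains some nonzero element $c$. Since the residue field is infinite, every nonzero ideal $I$ (necessarily $\fm$-primary) admits a principal reduction $(a)$ with $\overline{(a)} = \overline{I}$.

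First I would verify $(z)^* = \overline{(z)}$ for each $z \neq 0$. The inclusion $(z)^* \subseteq \overline{(z)}$ is general. Conversely, if $y \in \overline{(z)}$ then $y/z \in R'$, so $y R' \subseteq z R'$, and therefore
\[
c\,y^q \;\in\; z^q(cR') \;\subseteq\; (z^q) \;=\; (z)^{[q]}
\]
for every $q = p^e$, placing $y$ in $(z)^*$. Combined with the sandwich $(a)^* \subseteq I^* \subseteq \overline{I} = \overline{(a)} = (a)^*$ coming from a principal reduction, this already yields $I^* = \overline{I}$ for every ideal of $R$.

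The dictionary I would then invoke is: for $x = y/z \in K$ with $z \neq 0$, a monic equation $x^n + c_1 x^{n-1} + \cdots + c_n = 0$ with $c_i \in R$ corresponds, via multiplication by $z^n$, to the integral dependence $y^n + (c_1 z) y^{n-1} + \cdots + (c_n z^n) = 0$ of $y$ over $(z)$, and conversely. With this in hand the two implications are short. If $R$ is $n$-normal and $x \in I^*$, I would take a principal reduction $(a)$ of $I$, write $x = a z'$ with $z' \in R'$, apply $n$-normality to $z'$, and multiply through by $a^n$ to produce a degree-$n$ integral dependence of $x$ over $(a)$, whose coefficients lie a fortiori in the appropriate powers of $I$. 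Conversely, if $R$ is $n$-tight and $x = y/z \in K$ is integral over $R$, then $y \in \overline{(z)} = (z)^*$, so $n$-tightness supplies a degree-$n$ integral equation of $y$ over $(z)$; dividing by $z^n$ gives the required equation for $x$ over $R$.

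The main obstacle I anticipate is the bookkeeping behind the identification $I^* = \overline{I}$: both F-finiteness (to ensure the integral closure is module-finite over $R$ and so produces a nonzero conductor element) and the infinite residue field (to produce principal reductions of arbitrary ideals) enter here essentially. Once $I^* = \overline{I}$ is in place, everything that follows is a formal manipulation of the coefficients of an integral equation and carries no further subtlety.
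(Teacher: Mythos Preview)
Your proposal is correct and follows essentially the same route as the paper: reduce to a principal reduction $(u)$ of $I$ using the infinite residue field, then pass back and forth between integral equations for $x/u$ over $R$ and integral equations for $x$ over $(u)$ by scaling by powers of $u$. The only difference is that you explicitly justify $(z)^* = \overline{(z)}$ via the conductor of the normalization, whereas the paper simply invokes $\overline{(b)} = (b)^*$ without comment.
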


\begin{proof}

Assume that $R$ is $n$-normal.

Let $x \in I^*$, where $I$ is a nonzero ideal of $R$. Then $I$ has height one and hence there exists a minimal reduction of $I$ generated by one element $u$.  But then $\overline{(u)} = \overline{I}$ and so $x \in I^* \subseteq \overline{I} = \overline{(u)}$. Therefore, there exists $m \geq 1$ and $a_i\in (u)^{m-i}$ such that $$x^m + a_{m-1}x^{m-1} + \cdots + a_0=0.$$

Write $a_i = b_i u^{m-i}$, with $b_i \in R$ for all $i =0, \ldots, m-1$. Divide by $u^m$ and get $$(x/u)^m + b_{m-1}(x/u)^{m-1} + \cdots + b_0=0.$$

So, $x/u$ has to satisfy a degree $n$ integral dependence equation over $R$. Retracing the steps above, we get that $x$ must satisfy an integral dependence equation of degree $n$ over $(u)$, hence over $I$. In conclusion, $R$ is $n$-tight.

Conversely, assume that $R$ is $n$-tight and let $x=a/b$ integral over $R$. Then $a \in \overline{(b)}= (b)^*$ and hence $a$ satisfies an integral dependence equation over $(b)$ of degree $n$. Dividing by $b^n$ gives an integral dependence equation of $a/b$ of degree $n$ over $R$.

\end{proof}

Vassilev has shown that the conductor ideal $C$ is a strong test ideal in one-dimensional complete domains (see Example 1.11 in~\cite{Hu}). Hence for rings of the form $R=k [[S]]$, where $S$ is a numerical semigroup, we have that $R$ is $n$-normal, for $n$ equal to the minimal number of generators of $C$, and hence $R$ is also $n$-tight by the above observation. This produces examples of rings that are $n$-tight, for any $n$; however, these rings are not normal. This justifies our interest in Question~\ref{cc}, since the cubical cone is normal.

\subsection{Schwede's work and the abundance of strong test ideals}

In this section we approach the topic of the previous section via the theory of test ideals. In the past few years, many interesting connections have been found between concepts originating from tight closure theory and birational geometry. We discuss the notion of Cartier algebra on the ring $R$ and a variant of test ideals that were defined by Schwede in~\cite{Sc-TAMS}. The latter notion will help us refine our understanding of the test ideal of a ring discussed earlier.

\begin{Def} Let $\sC_e = \Hom{R}{R^{1/q}}{R}$. The {\it Cartier algebra} on $R$ is $$\sC = \oplus_{e \geq 0} \fC_e =\oplus_{e \geq 0} \Hom{R}{R^{1/q}}{R}.$$ Note that this is a noncommutative ring and $\sC_0= R$ is not central in $R$, so this object is not an $R$-algebra in the classical sense.

Let $\sD$ be a graded subring of $\sC$ such that $\sD_0=\sC_0 \simeq R$ and
$\sD_e \neq 0$ for some $e >0$. A {\it Cartier algebra pair} on $R$ is a pair of the form $(R, \sD)$.
\end{Def}

Let us remind the reader a few facts of relevance for our paper. Schwede has shown how to associate a test ideal to a Cartier subalgebra on $R$ in~\cite{Sc-TAMS, ScT}.  More precisely, fix $q=p^e$ and let $\phi:R^{1/q} \to R$ be an $R$-linear map. An ideal $J \leq R$ is $\phi$-compatible if $\phi (J^{1/q}) \subseteq J$. The test ideal $\tau(R, \phi)$ was defined by Schwede as the unique smallest $\phi$-compatible ideal that intersects nontrivially with $R^o$. Its existence was proved by Schwede based upon a technical result of Hochster and Huneke on test elements. Similarly, an ideal $J$ is called $\sD$-compatible if $\phi (J^{1/q}) \subseteq J$, for all $\phi \in \sD_e$ and all $e >0$. The test ideal $\tau(R, \sD)$ is the unique smallest $\sD$-compatible ideal that intersects $R^o$ nontrivially.  We will state Schwede's result below, and we direct the reader to ~\cite{Sc-TAMS} and, especially, \cite{ScT} for an overview of these ideas.
\begin{Lma}[Hochster-Huneke, Theorem 5.10 in~\cite{HH-TAMS}; also Lemma 3.6 in~\cite{ScT}]
\label{hh}
Let $R$ and $\phi$ be as above. Then there exists an element $c$ in $R^o$ such that for all $0 \neq d$ there exists $n \in \mathbb{Z}_{>0}$ with $$c \in \phi^n((dR)^{1/p^{ne}}).$$
\end{Lma}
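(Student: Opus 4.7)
The plan is to adapt Hochster-Huneke's classical construction of test elements to the Cartier algebra framework, as carried out by Schwede in \cite{ScT}. Since $R$ is F-finite and reduced, its singular locus is a proper closed subset of $\Spec R$ whose defining ideal meets $R^o$. My first step is to pick $c_0 \in R^o$ such that $R_{c_0}$ is regular, and to argue that a suitable fixed power $c = c_0^N$ serves as the element sought by the lemma.

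Next, given a nonzero $d \in R$, I would exploit the regularity (hence strong F-regularity) of $R_{c_0}$ together with the fact that $\phi$ provides a nontrivial Cartier operator on $R_{c_0}$. This should produce, for some $n$ depending on $d$, an identity $\phi^n\bigl(\eta \cdot d^{1/p^{ne}}\bigr) = 1$ in $R_{c_0}$ with $\eta \in (R_{c_0})^{1/p^{ne}}$. Writing $\eta = \eta_0 / c_0^M$ with $\eta_0 \in R^{1/p^{ne}}$, applying $R$-linearity of $\phi^n$, and using that $c_0 \in R^o$ is a nonzerodivisor to absorb the $c_0$-torsion when transferring the equation from $R_{c_0}$ back to $R$, I would derive an equation of the form
\[
\phi^n\bigl(c_0^{M' p^{ne}} \cdot (s d)^{1/p^{ne}}\bigr) = c_0^{M + M'} \quad \text{in } R
\]
for some $s \in R$ and some torsion exponent $M'$. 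Since the argument on the left-hand side lies in $(dR)^{1/p^{ne}}$, this exhibits $c_0^{M+M'}$ as an element of $\phi^n((dR)^{1/p^{ne}})$.

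The main obstacle is uniformity: I need a single exponent $N$, independent of $d$, such that $c_0^N \in \phi^n((dR)^{1/p^{ne}})$ for every nonzero $d$. Both $M$ and $M'$ a priori depend on $d$ and on $n(d)$, so the power $c_0^{M+M'}$ could in principle escape any fixed $c_0^N$ as $d$ varies. The crucial technical input---and the heart of Hochster-Huneke's Theorem~5.10 in \cite{HH-TAMS}---is that $M+M'$ is actually bounded uniformly in terms of $c_0$ and the Cartier algebra structure alone, never in terms of $d$. This bound relies decisively on F-finiteness: the finite generation of $R^{1/p^e}$ over $R$ yields a uniform description of both the denominators appearing in $\eta$ and the torsion exponent arising in the clearing step. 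Once such a uniform $N$ is in hand, taking $c = c_0^N$ completes the argument.
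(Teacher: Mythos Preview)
The paper does not prove this lemma; it is stated as a black box with attribution to Hochster--Huneke \cite{HH-TAMS} and Schwede--Tucker \cite{ScT}, so there is no ``paper's own proof'' to compare your proposal against. Your sketch is a faithful outline of the argument found in those references: localize to a regular locus, exploit strong F-regularity there to split the inclusion of $d^{1/p^{ne}}$, clear denominators, and invoke F-finiteness for a uniform bound on the exponent.

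One point you gloss over is worth flagging. When you write that regularity of $R_{c_0}$ ``together with the fact that $\phi$ provides a nontrivial Cartier operator'' yields $\phi^n(\eta\, d^{1/p^{ne}})=1$, you are implicitly using more than strong F-regularity: strong F-regularity gives you \emph{some} splitting map, not necessarily an iterate of the given $\phi$. The bridge is that on a regular F-finite ring $\Hom{R}{R^{1/q}}{R}$ is free of rank one over $R^{1/q}$, so after possibly enlarging $c_0$ the localized $\phi$ becomes surjective (it is a unit multiple of the trace), and then its iterates generate all of $\Hom{R_{c_0}}{R_{c_0}^{1/p^{ne}}}{R_{c_0}}$. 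Making this explicit would close the only real gap in your outline; the rest is a correct synopsis of the cited proof.
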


This allows us to state the existence result for test ideals of $R$ and $\phi: R^{1/q} \to R$ and, respectively,  of a subalgebra $\sD$ of $\sC$, mentioned above.

\begin{Thm}[cf. Theorem 3.18 in~\cite{Sc-TAMS}, Lemma 3.8 and Theorem 7.13 in~\cite{ScT}]
\label{form}
Let $R, \phi$ and $c$ be as in the above Lemma. Then $\tau(R, \phi)$ exists and equals

$$\sum_{n \geq 0} \phi^n((cR)^{1/p^{ne}}).$$ The test ideal of an algebra pair $(R, \sD)$ is

$$\tau(R, \sD) = \sum_{e >0} \sum_{\phi \in \sD_e} \tau(R, \phi).$$
\end{Thm}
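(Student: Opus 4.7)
The plan is to verify the two characterizing properties of $\tau(R,\phi)$ for the proposed ideal
$$J \;:=\; \sum_{n \geq 0} \phi^n((cR)^{1/p^{ne}}),$$
namely that $J$ is a $\phi$-compatible ideal meeting $R^o$ nontrivially, and that $J$ is contained in every such ideal. The sum is an ideal of $R$ because each summand is the image of an $R$-linear map into $R$. It meets $R^o$ since the $n=0$ summand is $cR$ and $c \in R^o$ by Lemma~\ref{hh}. For $\phi$-compatibility, I would exploit the shift identity $\phi(\phi^n(x)^{1/p^e}) = \phi^{n+1}(x)$, valid under the natural identification $R^{1/p^{(n+1)e}} = (R^{1/p^{ne}})^{1/p^e}$, which sends the $n$-th summand of $J^{1/p^e}$ into the $(n+1)$-th summand of $J$, giving $\phi(J^{1/p^e}) = \sum_{n \geq 1}\phi^n((cR)^{1/p^{ne}}) \subseteq J$.

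For minimality, let $K$ be any $\phi$-compatible ideal with $0 \neq d \in K \cap R^o$. Lemma~\ref{hh} applied to this $d$ gives $n > 0$ with $c \in \phi^n((dR)^{1/p^{ne}})$. Since $dR \subseteq K$, iterating the inclusion $\phi(K^{1/p^e}) \subseteq K$ yields $\phi^n(K^{1/p^{ne}}) \subseteq K$, whence $c \in K$. Applying the same iteration now to $cR \subseteq K$ shows $\phi^n((cR)^{1/p^{ne}}) \subseteq K$ for every $n \geq 0$, so $J \subseteq K$. This establishes the existence and explicit form of $\tau(R,\phi)$.

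For the algebra pair, set $T := \sum_{e > 0} \sum_{\phi \in \sD_e} \tau(R,\phi)$. Since $\sD_e \neq 0$ for some $e > 0$, the first part applied to any nonzero $\phi \in \sD_e$ produces a nonzero $\tau(R,\phi) \subseteq T$ meeting $R^o$. For $\sD$-compatibility, let $\psi \in \sD_{e'}$; using that $\sD$ is closed under the Cartier product $\psi \cdot \phi^n \in \sD_{e' + ne}$ together with the composition identity $\psi(\phi^n(x)^{1/p^{e'}}) = (\psi \cdot \phi^n)(x)$ (under the analogous Frobenius identification), the first formula gives
$$\psi(\tau(R,\phi)^{1/p^{e'}}) \;=\; \sum_{n \geq 0} (\psi \cdot \phi^n)((cR)^{1/p^{e'+ne}}) \;\subseteq\; \sum_{n \geq 0} \tau(R,\psi \cdot \phi^n) \;\subseteq\; T.$$
Conversely, any $\sD$-compatible ideal meeting $R^o$ is $\phi$-compatible for each individual $\phi \in \sD_e$, $e > 0$, so by the first part it contains every $\tau(R,\phi)$, and therefore contains $T$. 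The main technical obstacle is keeping track of Frobenius twists carefully — verifying the shift identity $\phi \circ (\phi^n)^{1/p^e} = \phi^{n+1}$ and the corresponding Cartier composition identity — but no deeper ingredient beyond Lemma~\ref{hh} is needed.
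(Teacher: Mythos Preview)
The paper does not supply its own proof of this theorem; it is quoted from Schwede's work and the Schwede--Tucker survey, so there is no in-paper argument to compare against. Your argument for the first assertion---that $J=\sum_{n\ge 0}\phi^n((cR)^{1/p^{ne}})$ is $\phi$-compatible, meets $R^o$, and is contained in every such ideal via Lemma~\ref{hh}---is exactly the standard proof in those references and is correct.

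There is, however, a genuine gap in your treatment of the algebra-pair formula. In the displayed chain
\[
\psi\bigl(\tau(R,\phi)^{1/p^{e'}}\bigr)\;=\;\sum_{n\ge 0}(\psi\cdot\phi^n)\bigl((cR)^{1/p^{e'+ne}}\bigr)\;\subseteq\;\sum_{n\ge 0}\tau(R,\psi\cdot\phi^n),
\]
the element $c$ on the left is the one produced by Lemma~\ref{hh} for the \emph{original} map $\phi$, whereas the test ideal $\tau(R,\psi\cdot\phi^n)$ is characterized using a possibly different element attached to $\psi\cdot\phi^n$. The middle inclusion would follow once you know $c\in\tau(R,\psi\cdot\phi^n)$, but Lemma~\ref{hh} as stated does not give this: its $c$ depends on the map. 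The standard repair, which is what the cited references do, is to choose $c$ uniformly---any big test element $c\in\tau_b(R)$ works, since $\tau_b(R)=\tau(R,\sC)$ is compatible with every $\chi\in\sC_e$ and meets $R^o$, hence $\tau_b(R)\subseteq\tau(R,\chi)$ for all $\chi$; equivalently, one checks that the formula $\tau(R,\chi)=\sum_{m\ge 0}\chi^m((cR)^{1/p^{m e}})$ holds for \emph{any} $c\in\tau(R,\chi)\cap R^o$, and then fixes one $c$ lying in all of them. With this uniform choice your compatibility argument goes through unchanged.
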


The result above has a direct consequence for strong test ideals.

\begin{Thm}
\label{main}
Let $\phi: R^{1/q} \to R$ be an $R$-linear map. Then $\tau(R, \phi)$ is a strong test ideal in $R$. Moreover, if $(R, \sD)$ is an algebra pair, then the test ideal $\tau(R, \sD)$ is a strong test ideal.

\end{Thm}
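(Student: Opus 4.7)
The plan is to exploit the characterization from Theorem~\ref{form} of $\tau := \tau(R,\phi)$ as the unique smallest $\phi$-compatible ideal of $R$ meeting $R^o$ nontrivially. Define
\[
K := \{\, a \in R : aJ^* \subseteq \tau J \text{ for every ideal } J \leq R \,\}.
\]
This $K$ is an ideal of $R$. If I can verify that $K$ is $\phi$-compatible and that $K \cap R^o \neq \emptyset$, then by minimality $\tau \subseteq K$, i.e., $\tau J^* \subseteq \tau J$ for every $J$; together with the obvious reverse containment this gives $\tau J^* = \tau J$, which is exactly the strong test ideal property (noting that $\tau \cap R^o \neq \emptyset$ by construction).

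Showing $K \cap R^o \neq \emptyset$ is easy. Since $R$ is F-finite and reduced it admits a (classical) test element $c_b \in R^o$ with $c_b J^* \subseteq J$ for every ideal $J$. Choose $\tau_0 \in \tau \cap R^o$; since $R^o$ is multiplicatively closed, $c_b\tau_0 \in R^o$, and
\[
(c_b\tau_0)\, J^* \subseteq \tau_0(c_b J^*) \subseteq \tau_0 J \subseteq \tau J,
\]
so $c_b\tau_0 \in K \cap R^o$.

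The substantive step --- and where I expect the main difficulty to lie --- is verifying the $\phi$-compatibility of $K$. Given $a \in K$, I need $\phi(a^{1/q})\cdot J^* \subseteq \tau J$ for every $J$. For $x \in J^*$, a brief check of the definition of tight closure gives $x^q \in (J\brq)^*$, so applying the membership $a \in K$ to the ideal $J\brq$ yields $ax^q \in \tau \cdot J\brq$, and I may write
\[
ax^q = \sum_i t_i r_i j_i^q \quad \text{with } t_i \in \tau,\ r_i \in R,\ j_i \in J.
\]
Because $R$ is reduced and F-finite, $q$-th roots in $R^{1/q}$ distribute over sums and products, so
\[
(ax^q)^{1/q} = \sum_i j_i\,(t_i r_i)^{1/q} \in R^{1/q}.
\]
The $R$-linearity of $\phi$ gives $\phi(a^{1/q})\,x = \phi\bigl(x\cdot a^{1/q}\bigr) = \phi\bigl((ax^q)^{1/q}\bigr)$, and pushing $\phi$ through the sum, together with the $\phi$-compatibility of $\tau$ itself (so each $\phi((t_i r_i)^{1/q}) \in \phi(\tau^{1/q}) \subseteq \tau$), yields
\[
\phi(a^{1/q})\,x \;=\; \sum_i j_i\,\phi\bigl((t_i r_i)^{1/q}\bigr) \;\in\; J\cdot\tau.
\]
Hence $\phi(a^{1/q}) \in K$, and the minimality argument concludes the $\phi$ case.

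For an algebra pair $(R,\sD)$ two clean routes are available. One may re-run the argument verbatim with $\tau(R,\sD)$ in place of $\tau(R,\phi)$ in the definition of $K$, invoking $\sD$-compatibility (so $\phi(\tau(R,\sD)^{1/q}) \subseteq \tau(R,\sD)$ for every $\phi \in \sD_e$, $e>0$); minimality of $\tau(R,\sD)$ among $\sD$-compatible ideals meeting $R^o$ then closes the case. Alternatively, one may invoke the formula $\tau(R,\sD) = \sum_{e>0}\sum_{\phi \in \sD_e}\tau(R,\phi)$ from Theorem~\ref{form} together with the elementary observation that a (possibly infinite) sum of strong test ideals is itself a strong test ideal, since $(T_1+T_2)I^* = T_1 I^* + T_2 I^* = T_1 I + T_2 I = (T_1+T_2)I$.
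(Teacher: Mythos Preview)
Your proof is correct and follows essentially the same route as the paper: define the ideal $K=\bigcap_I (\tau I:I^*)$, show it is $\phi$-compatible by using $x^q\in (I\brq)^*$ together with $\phi(\tau^{1/q})\subseteq\tau$, show it meets $R^o$, and invoke minimality of $\tau$; the algebra-pair case is then reduced to the single-$\phi$ case via the sum formula, exactly as the paper does. The only cosmetic differences are that you write out the element $ax^q=\sum_i t_ir_ij_i^q$ explicitly where the paper simply passes to $\tau^{1/q}I$, and that for $K\cap R^o\neq\emptyset$ you use a general test element times an element of $\tau\cap R^o$ whereas the paper uses $c^2$ with $c$ the element from Lemma~\ref{hh}.
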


\begin{proof}

According to Theorem~\ref{form}, $$\tau(R, \sD) = \sum_{e >0} \sum_{\phi \in \sD_e} \tau(R, \phi).$$

By using the definition of a strong test ideal, we can see that the sum of two strong test ideals is a strong test ideal, therefore it is enough to show that
$\tau(R, \phi)$ is a strong test ideal in $R$, for any $R$-linear map $\phi: R^{1/q} \to R$.

Let $c$ be a test element for $R$, which exists by Lemma~\ref{hh}. By Theorem~\ref{form}, we can write

$$\tau:=\tau(R, \phi) = \sum_{n \geq 0} \phi^n((cR)^{1/q^n}).$$

Let $J = \cap_{I \leq R}  \tau I : I^*$. Our plan is to show that $J$ is $\phi$-compatible, that is $\phi(J^{1/q^n}) \subseteq J$.

Let $z \in J$. We want $\phi(z^{1/q}) \in J$, or, in other words, $I^* \phi(z^{1/q}) \subset \tau I$, for every ideal $I$ in $R$.

Let $I$ be an ideal in $R$ and $x \in I^*$. Therefore there exists $d \in R^o$ such that  $d x^{qq'} \in I^{[qq']}, $ for all $q=p^{e}, q'=p^{e'}$ with $e,e'\gg 0$. So, 
$d(x^{q})^{q'} \in (I^{[q]})^{[q']}$, which shows that $x^q \in (I\brq)^*$.

But $z \in J$ and then $z (I\brq)^* \in \tau I\brq$, so $zx^q \in \tau I\brq$. Taking the $q$th roots, we have $z^{1/q} x \in \tau^{1/q}I$ and hence

$x \phi(z^{1/q}) = \phi(z^{1/q} x) \subset \phi(\tau^{1/q} I)= I \phi(\tau^{1/q}) \subseteq I \tau.$ This shows that $I^* \phi(z^{1/q}) \subset \tau I$.

Finally, we need to check that $J$ contains an element from $R^o$.

We claim that $c^2 \in J$. To see this note, that $c \in \tau$. Now, for any ideal $I$ in $R$ and $z \in I^*$, we have $c^2 z = c(cz) \subset cI \subset \tau I$.

\end{proof}

This result recovers immediately earlier results of Vraciu and, respectively, Takagi on strong test ideals.

\begin{Cor}[Vraciu]
\label{V}
The test ideal $\tau_b(R)$ is a strong test ideal in $R$.

\end{Cor}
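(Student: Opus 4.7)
The plan is to realize $\tau_b(R)$ as the test ideal of an appropriate Cartier algebra pair, after which Theorem~\ref{main} delivers the conclusion directly. The natural candidate is the pair $(R, \sC)$, where $\sC$ is the full Cartier algebra itself. This is a legitimate Cartier algebra pair in the sense of our definition, since $\sC_0 \simeq R$ and, under our standing hypothesis that $R$ is F-finite and reduced, $\sC_e \neq 0$ for some (in fact every) $e > 0$.

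Granting the identification
$$\tau(R, \sC) = \tau_b(R),$$
the corollary is immediate: Theorem~\ref{main} applied to $(R, \sC)$ says that $\tau(R, \sC)$ is a strong test ideal, so the same is true of $\tau_b(R)$. The substantive point, and thus the main obstacle, is the identification $\tau(R, \sC) = \tau_b(R)$, which is a known result from Schwede's framework (\cite{Sc-TAMS, ScT}). One route to it uses Matlis duality: the $\sC$-compatible ideals of $R$ are in bijection with the submodules of the injective hull $\E$ that are stable under the Matlis duals of all maps in $\sC$; under this correspondence $\tau(R, \sC)$ is the annihilator of the largest proper such submodule, and that submodule coincides with $0^*_{\E}$, whose annihilator agrees with $\tau_b(R)$ in the F-finite reduced local setting.

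A more hands-on alternative would go directly through Theorem~\ref{form}: every element $c$ produced by the Hochster--Huneke Lemma~\ref{hh} is a test element in the classical sense for tight closure, and the sum
$$\tau(R, \sC) \;=\; \sum_{e > 0}\sum_{\phi \in \sC_e}\sum_{n \geq 0} \phi^n\bigl((cR)^{1/p^{ne}}\bigr)$$
can be shown to exhaust the big test ideal $\tau_b(R)$ by running every $\phi \in \sC_e$ against $c$ and appealing to the description of $\tau_b(R)$ through the Matlis dual. Either path reduces the corollary to a one-line consequence of Theorem~\ref{main}, and I would write up the argument by quoting Schwede's identification rather than reproving it.
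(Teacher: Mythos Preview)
Your proposal is correct and follows essentially the same approach as the paper: apply Theorem~\ref{main} to the full Cartier algebra pair $(R,\sC)$ and invoke the known identification $\tau(R,\sC)=\tau_b(R)$. The only difference is bibliographic: the paper attributes the identification to Hara and Takagi (Lemma 2.1 in~\cite{HT}) rather than to Schwede's framework, but the logical structure of the argument is identical.
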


\begin{proof}

The result is immediate by applying our Theorem to the pair $(R, \sC)$ where $\sC$ is the complete algebra of maps on $R$. A consequence of a result by Hara and Takagi, Lemma 2.1 in~\cite{HT}, shows that
$\tau(R, \fC) = \tau_{b}(R)$.
\end{proof}

To state the next result we need the notion of an $\fa^t$-tight closure, where $\fa$ is an ideal of $R$ and $t$ is a positive real number.

\begin{Def}
For an $R$-module $M$ and a submodule $N$ in $M$

$$N^{*, \fa ^t}= \{ x \in M : {\rm there \ exists} \ c\in R^o {\rm \ such \ that} \ cx^q \fa^{\lceil tq \rceil} \in N\brq _M, {\rm for \ all} \ q=p^e \gg0\}.$$

The test ideal $\tau(R, \fa^t) ={\rm Ann}_{R}(0_E^{*, \fa ^t})$ where $E=E_R(R/\fm)$ is the injective hull of the $R$-module $R/\fm$; for details, see~\cite{HT}.

\end{Def}

The next Corollary is due to Takagi, see Proposition 2.2 and Remark 2.3 in~\cite{T} (where one can specialize $\fa= R$).

\begin{Cor}[Takagi]
 Let $\fa$ be an ideal of $R$ and $t \geq 0$. Then $\tau(R, \fa ^t)$ is a strong test ideal in $R$.
\end{Cor}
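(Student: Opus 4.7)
The plan is to realize $\tau(R, \fa^t)$ as the test ideal $\tau(R, \sD)$ of an appropriate Cartier algebra pair $(R, \sD)$ on $R$, and then apply Theorem~\ref{main} directly.

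I would set $\sD_0 = R$ and, for each $e \geq 1$ with $q = p^e$,
$$\sD_e \;=\; \fa^{\lceil t(q-1) \rceil} \cdot \sC_e,$$
that is, the $R$-submodule of $\sC_e$ consisting of maps of the form $r^{1/q} \mapsto \phi((ar)^{1/q})$ with $\phi \in \sC_e$ and $a \in \fa^{\lceil t(q-1) \rceil}$. The numerical inequality $\lceil t(qq'-1) \rceil \leq \lceil t(q-1) \rceil \cdot q' + \lceil t(q'-1) \rceil$ shows that this collection is closed under composition in $\sC$, so $\sD$ is a graded subring of $\sC$, and $(R, \sD)$ is a Cartier algebra pair provided $\fa$ meets $R^o$ (which is the natural nondegeneracy hypothesis in Takagi's setting).

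The second step is to identify $\tau(R, \sD)$ with $\tau(R, \fa^t)$. An ideal $J \leq R$ is $\sD$-compatible precisely when $\phi\bigl((\fa^{\lceil t(q-1)\rceil} J)^{1/q}\bigr) \subseteq J$ for all $\phi \in \sC_e$ and all $e > 0$. Dualizing via Matlis duality into $E = E_R(R/\fm)$ and comparing with the definition of $0^{*,\fa^t}_E$ stated above shows that the smallest such ideal intersecting $R^o$ nontrivially coincides with $\op{Ann}_R(0^{*,\fa^t}_E) = \tau(R, \fa^t)$. Once this identification is in place, Theorem~\ref{main} applied to $(R, \sD)$ immediately yields that $\tau(R, \fa^t)$ is a strong test ideal.

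The main obstacle is the identification $\tau(R, \sD) = \tau(R, \fa^t)$: one must reconcile the exponent $\lceil t(q-1) \rceil$ that arises naturally on the Cartier-algebra side with the exponent $\lceil tq \rceil$ used in the definition of $N^{*, \fa^t}$, and one must ensure that Lemma~\ref{hh} is applied uniformly to the twisted maps $\phi \circ (a^{1/q} \cdot -)$ as $a$ varies over $\fa^{\lceil t(q-1)\rceil}$. The exponent discrepancy is handled by passing to $q \gg 0$, where $\lceil tq \rceil - \lceil t(q-1) \rceil$ is bounded and is absorbed in the colimit of Theorem~\ref{form}; this comparison is exactly the one carried out in the Schwede--Tucker framework already cited.
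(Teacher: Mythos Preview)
Your proposal is correct and follows essentially the same route as the paper: define the Cartier subalgebra $\sD_e = \fa^{\lceil t(q-1)\rceil}\cdot \sC_e$, identify $\tau(R,\sD)$ with $\tau(R,\fa^t)$ (the paper simply cites Exercise~7.9 in~\cite{ScT} for this, whereas you sketch the Matlis-duality argument and the exponent reconciliation), and then invoke Theorem~\ref{main}. The only extra detail you supply beyond the paper is the verification that $\sD$ is closed under composition and the discussion of the $\lceil tq\rceil$ versus $\lceil t(q-1)\rceil$ discrepancy, both of which are implicit in the reference the paper uses.
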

\begin{proof}
Let $\sC ^{\fa ^t}  = \oplus_{e \geq 0} ( \fa ^{ \lceil t(p^e-1) \rceil })^{1/p^e} \cdot \Hom{R}{R^{1/q}}{R}$ which defines an algebra pair $(R, \sC ^{\fa ^t})$.

Then, as in Exercise 7.9 in~\cite{ScT}, $\tau(R, \sC ^{\fa ^t}) = \tau(R, \fa ^t)$ and now Theorem~\ref{main} gives the result.

\end{proof}

\begin{Cor}
\begin{enumerate}
\item
Let $\phi: R^{1/q} \to R$ be an $R$-linear map. If $\tau(R, \phi)$ is principal, then $R$ is weakly F-regular.

\item
If $(R, \sD)$ is an algebra pair such that  $\tau(R, \sD)$ is principal, then $R$ is weakly F-regular.
\end{enumerate}

\end{Cor}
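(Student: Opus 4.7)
The plan is to combine Theorem~\ref{main} with the observation, recorded at the start of Section 2.1, that a ring is $1$-tight if and only if it is weakly F-regular. By Theorem~\ref{main}, both $\tau(R,\phi)$ and $\tau(R,\sD)$ are strong test ideals in $R$, so Huneke's Theorem 2.1 in~\cite{Hu} (cited earlier in this paper) applies: the minimal degree of an integral dependence equation satisfied by an element $x \in I^*$ over any ideal $I \leq R$ is at most the minimal number of generators of the chosen strong test ideal.

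Assume $\tau(R,\phi)$ is principal. Its minimal number of generators is then $1$, so every $x \in I^*$ must satisfy an integral dependence equation of degree $1$ over $I$, that is, a relation of the form $x + a_0 = 0$ with $a_0 \in I$. This forces $x \in I$, and hence $I^* = I$ for every ideal $I \leq R$, so $R$ is weakly F-regular. The argument for part (2) is identical after replacing $\tau(R,\phi)$ with $\tau(R,\sD)$.

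If one prefers to bypass the appeal to Huneke's theorem, a direct argument is available. Writing $\tau = (t)$, the strong test ideal identity $\tau I^* = \tau I$ yields, for each $x \in I^*$, an element $b \in I$ with $tx = tb$, so $t(x-b) = 0$. Any generator of a principal ideal meeting $R^o$ must itself lie in $R^o$ (since each minimal prime is prime), and reducedness of $R$ then makes $t$ a nonzerodivisor, giving $x = b \in I$. The only mildly technical point in either approach is this placement of the chosen generator inside $R^o$, and I do not foresee any real obstacle.
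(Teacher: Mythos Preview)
Your proposal is correct and matches the paper's approach: the paper's one-line proof simply asserts that a principal strong test ideal forces $I^*=I$ for all $I$, which is exactly your direct cancellation argument (and your route through Huneke's bound is an equally valid unpacking of the same observation). The only addition you supply beyond the paper is the justification that the generator $t$ lies in $R^o$ and is hence a nonzerodivisor, which the paper leaves implicit.
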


\begin{proof}
It is obvious that, if $R$ admits a strong test ideal that is principal, then $I^*=I$ for all ideals $I$ in $R$.

\end{proof} 

\subsection{Computations} We illustrate now, with a few examples, how the preceding considerations apply. We have remarked earlier that the number of minimal generators of a strong test ideal represents a uniform bound for the minimal degree of the equation
of integral dependence of an arbitrary element $x \in I^*$ over $I$, where $I$ is an ideal of $R$. Therefore, having a larger class of strong test ideals can give a better bound. In ~\cite{KS}, Katzman and Schwede have produced an algorithm, which was implemented in Macaulay2, that computes all $\phi$-compatible ideals of a surjective $R$-linear map $\phi: R^{1/q} \to R$.

Let $\phi: R^{1/q} \to R$ be a surjective $R$-linear map. In order to compute the test ideal $\tau(R,\phi)$, which is the smallest $\phi$-compatible ideal with respect to inclusion, we have to intersect all the $\phi$-compatible prime ideals, because $\phi$-compatible ideals are closed under radicals and primary decomposition by Proposition 3.2 in \cite{KS}.

By Fedder's Lemma ~\ref{f}, we know that there exists an $S$-linear map $\Phi: S^{1/q} \to S$ which is compatible with $I$ such that $\phi=\Phi/I$ (see also Remark~\ref{trace}). Now, if we want to determine the $\phi$-compatible prime ideals, Lemma 2.4 in ~\cite{KS} tells us that it is enough to determine the $\Phi$-compatible prime ideals that contain $I$ since there is a bijective correspondence between the $\phi$-compatible ideals and the $\Phi$-compatible ideals containing $I$. 

Next, we have to eliminate from this list the set of minimal primes of the ideal $I$, otherwise by intersecting them and modding out the result by the ideal $I$ we obtain the zero ideal. Then the class of the ideal obtained after intersecting the remaining ideals modulo $I$ is the test ideal $\tau(R,\phi)$.


Therefore, we have a concrete way of computing strong test ideals for F-pure rings. The following is an example due to Katzman, and further studied by Katzman and Schwede in ~\cite{KS}, which illustrates this idea. In the following examples, we will generally use the same letter to denote an element of $S$ and its image in $S/I$, when it is harmless to do so, to avoid complicating the notation.

\begin{Exa} Let $k=\mathbb{F}_2$ and let $S= k[[x_1,\ldots, x_5]]$. Let
$\mathcal{I}$ be the ideal generated by the $2 \times 2$ minors of 
\[\begin{pmatrix}x_1&x_2&x_2&x_5\\x_4&x_4&x_3&x_1\end{pmatrix}\]


Consider $R = S/\mathcal{I}$. The ring $R$ is Cohen-Macaulay reduced and two-dimensional. 

Let $\phi: R^{1/2} \to R$ be an $R$-linear map constructed as follows:

Let $S^{1/2} = k\left[\left[x_1^{1/2}, \ldots, x_5^{1/2}\right]\right]$ which is a free $S$-module with basis $\left\{x_1^{\lambda_1/2} x_2^{\lambda_2/2}\cdots x_5^{\lambda_5/2}\right\}_{0\leq\lambda_i\leq 1 }$. Construct $\Phi_S: S^{1/2} \to S$, an $S$-linear map, by sending
 $x_1^{1/2} x_2^{1/2}\cdots x_5^{1/2}$ to $1$ and the other basis elements to zero.

Now fix $z \in (I^{[2]}:_S I)\setminus \fm^{[2]}$. For an element $\overline{s} \in S/I$ we let $\phi( \overline{s} ^{1/2}) =  \Phi_S (z^{1/2} s^{1/2}) \  {\rm modulo} \  I$.

 This defines an $R$-linear map $\phi:R^{1/2} \to R$.

For the choice $z=z_0 = x_1^3x_2x_3+x_1^3x_2x_4 + x_1^2x_3x_4x_5+x_1x_2x_3x_4x_5 + x_1x_2x_4^2x_5+x_2^2x_4^2x_5+x_3x_4^2x_5^2+x_4^3x_5^2$, Katzman and Schwede have applied their algorithm ~\cite{KS} and obtained the list of all $\phi$-compatible prime ideals of $R$.
 The list of $\phi$-compatible prime ideals is as follows
$$R, (x_1, x_4), (x_1, x_4, x_5),$$
$$(x_1 + x_2, x_1^2 + x_4 x_5), (x_1 + x_2, x_2^2 + x_4 x_5), (x_3 + x_4, x_1 + x_2, x_2^2 + x_4 x_5),$$
$$(x_1, x_2, x_5, x_3 + x_4), (x_1, x_2, x_4), (x_1, x_2, x_5), (x_1, x_3, x_4),$$
$$(x_1, x_2, x_3, x_4), (x_1, x_2, x_4, x_5), (x_1, x_3, x_4, x_5), (x_1, x_2, x_3,x_4, x_5).$$
 From this list we can easily identify the unique smallest $\phi$-compatible ideal. Lemma 2.4 in ~\cite{KS} tells us that we have to keep the $\Phi$-compatible prime ideals that contain the ideal $I$ and eliminate the minimal primes of $I$ from this list.

We have that the set of minimal primes of $I$ is given by 
$$Min(I)=\{(x_1 + x_2, x_1^2 + x_4 x_5), (x_1, x_2, x_5), (x_1, x_4, x_3)\}.$$
The list of $\Phi$-compatible prime ideals that contain $I$ and are not in the list of the minimal primes of $I$ is given by
$$(x_1, x_2, x_4, x_5), (x_1, x_2, x_3, x_4, x_5), (x_1, x_2, x_5, x_3 + x_4), (x_1, x_2, x_3, x_4), (x_1, x_3, x_4, x_5).$$
Next, by intersecting them and taking the class modulo $I$ we obtain the test ideal of the pair $(R,\phi)$
$$\tau(R, \phi) = (x_1, x_2 x_5, x_3 x_4 + x_4^2).$$

Therefore, this ring is $3$-tight, so every element $x$ belonging to $I^*$ satisfies a degree $3$ equation of integral dependence over $I$. 

\medskip
We found two more elements $z_1, z_2$ contained in $(I^{[2]}:_S I)\setminus \fm^{[2]}$, such that $z_0, z_1, z_2$ generate $(I^{[2]}:_S I)$.

The first one is 
$z_1 = x_1x_2x_3^2x_5 + x_1x_2x_4^2x_5 + x_1^3x_2x_3 + x_1^3x_2x_4 + x_1^2x_3x_4x_5+x_1x_2x_3x_4x_5 + x_1x_2x_4^2x_5+x_2^2x_4^2x_5+x_3x_4^2x_5^2+x_4^3x_5^2$. 

Since $z_1\in(I^{[2]}:_S I)\setminus \fm^{[2]}$, this element defines an $R$-linear map $\phi_1:R^{1/2} \to R$, given by $\phi_1( \overline{s} ^{1/2}) =  \Phi_S (z_1^{1/2} s^{1/2}) \  {\rm modulo} \  I$, for all $\overline{s} \in S/I$.
We ran the algorithm ~\cite{KS} and  we obtained the list of all $\phi_1$-compatible prime ideals of $R$ as follows

$$R, (x_1 + x_2, x_3 + x_4, x_1^2 + x_4 x_5), (x_3 + x_4, x_2, x_1, x_5),$$
$$  (x_1, x_2, x_5), (x_1, x_3, x_4), (x_1, x_2, x_3, x_4),$$
$$ (x_1, x_3, x_4, x_5), (x_1, x_2, x_3, x_5), (x_1, x_2, x_3, x_4, x_5).$$
The list of $\Phi$-compatible prime ideals that contain $I$ and are not in the list of the minimal primes of $I$ is given by
$$(x_3 + x_4, x_2, x_1, x_5), (x_1, x_2, x_3, x_4),  (x_1, x_2, x_3, x_4, x_5), $$
$$(x_1, x_2, x_3, x_5), (x_1, x_3, x_4, x_5).$$
Next, by intersecting them and taking the class modulo $I$ we obtain the test ideal of the pair $(R,\phi_1)$
$$\tau(R, \phi_1) = (x_1, x_4 x_5, x_3 x_5 , x_2 x_5, x_2 x_4, x_3^2 + x_3 x_4, x_2 x_3).$$

The second one is 
$z_2 = x_1^3 x_3 x_4 + x_1 x_2^2 x_3 x_4 + x_1^3x_2x_3 + x_1^3x_2x_4 + x_1^2x_3x_4x_5+x_1x_2x_3x_4x_5 + x_1x_2x_4^2x_5+x_2^2x_4^2x_5+x_3x_4^2x_5^2+x_4^3x_5^2$.

Since $z_2\in(I^{[2]}:_S I)\setminus \fm^{[2]}$, this element defines an $R$-linear map $\phi_2:R^{1/2} \to R$, given by $\phi_2( \overline{s} ^{1/2}) =  \Phi_S (z_1^{1/2} s^{1/2}) \  {\rm modulo} \  I$, for all $\overline{s} \in S/I$ .
We ran the algorithm ~\cite{KS} and  we obtained the list of all $\phi_2$-compatible prime ideals of $R$ as follows

$$R, (x_1 + x_2, x_3 + x_4, x_2^2 + x_4 x_5), (x_3 + x_4, x_2, x_1, x_5),$$
$$  (x_1, x_2, x_5), (x_1, x_3, x_4), (x_1, x_2, x_3, x_4),$$
$$ (x_1, x_2, x_4, x_5), (x_1 + x_2, x_1^2 + x_4 x_5), (x_1, x_2, x_3, x_4, x_5),$$
$$(x_1, x_2, x_4), (x_1, x_4), (x_1, x_4, x_2 + x_5), (x_1, x_3, x_4, x_2 + x_5).$$
The list of $\Phi$-compatible prime ideals that contain $I$ and are not in the list of the minimal primes of $I$ is given by
$$(x_3 + x_4, x_2, x_1, x_5), (x_1, x_2, x_3, x_4),  (x_1, x_2, x_3, x_4, x_5), $$
$$(x_1, x_2, x_4, x_5), (x_1, x_3, x_4, x_2 + x_5).$$
Next, by intersecting them and taking the class modulo $I$ we obtain the test ideal of the pair $(R,\phi_2)$
$$\tau(R, \phi_2) = (x_1, x_4 x_5, x_3 x_5 , x_2 x_3, x_2 x_4, x_4^2 + x_3 x_4, x_2^2 + x_2 x_5).$$

Hence, both $\tau(R, \phi_1)$ and $\tau(R, \phi_2)$ have seven minimal generators. 
\end{Exa}
\begin{Exa}
Let $k=\mathbb{F}_2$ and $S=k[[x_1,x_2,x_3,x_4]]$. Let $\mathcal{I}=(x_1 x_3, x_1 x_4, x_2 x_3, x_2 x_4)$ and $R=S/\mathcal{I}$. 
Let $\phi: R^{1/2} \to R$ be an $R$-linear map constructed as follows:

Let $S^{1/2} = k\left[\left[x_1^{1/2}, \ldots, x_4^{1/2}\right]\right]$ which is a free $S$-module with basis $\left\{x_1^{\lambda_1/2} x_2^{\lambda_2/2} x_3^{\lambda_3/2} x_4^{\lambda_4/2}\right\}_{0\leq\lambda_i\leq 1}$. Construct $\Phi_S: S^{1/2} \to S$, an $S$-linear map, by sending
 $x_1^{1/2} x_2^{1/2} x_3^{1/2} x_4^{1/2}$ to $1$ and the other basis elements to zero.

Let $z=x_1 x_2 x_3 x_4$ an element contained in $(I^{[2]}:I)\setminus \fm^{[2]}$. The choice of the element $z$ guarantees that the map $\phi$ is surjective from Fedder's Lemma.
By applying the algorithm of Katzman and Schwede ~\cite{KS}, we will get the list of $\phi$-compatible primes 
$$R, (x_4), (x_4, x_3), (x_4, x_3, x_2), (x_4, x_3, x_1), (x_4, x_3, x_2, x_1),$$
$$(x_4, x_2), (x_4, x_2, x_1), (x_4, x_1), (x_3), (x_3, x_2), (x_3, x_2, x_1),$$
$$(x_3, x_1), (x_2), (x_2, x_1), (x_1).$$
Using this list, one can obtain the unique smallest $\phi$-compatible ideal. 
The set of minimal primes of $I$ is $Min(I)=\{(x_1, x_2),(x_3, x_4)\}$. 

The $\Phi$-compatible prime ideals that contain the ideal $I$ and are not minimal primes of $I$ are the following
$$ (x_4, x_3, x_2), (x_4, x_3, x_2, x_1), (x_4, x_3, x_1), (x_4, x_2, x_1), (x_3, x_2, x_1).$$
After intersecting them in $R=S/I$ we obtain the test ideal of the pair $(R,\phi)$ is 
$$\tau(R, \phi) = (x_3 x_4, x_1 x_2).$$

Therefore, this ring is $2$-tight, hence every element $x$ belonging to $I^*$ satisfies a degree $2$ equation of integral dependence over $I$. 

\medskip
We notice that the number of generators of $\tau(R, \phi)$ is actually the number of facets of the simplicial complex $\Delta$ associated to the square-free monomial ideal $I$. In the next section, Corollary ~\ref{facets} will show that this happens for all Stanley-Reisner rings.
\end{Exa}

Next we will consider, from our perspective, a well-known example of an F-rational, F-pure ring that is not weakly F-regular from ~\cite{HH-TAMS}.

\begin{Exa}
Let $S=K[[X,Y,Z]]/(F)$, where $K$ is an algebraically closed field of characteristic $p=7$ and $F = X^3 - YZ(Y+Z)$. Let $\omega$ be a primitive cube root of unity in $K$. Let $G = \{1, \omega, \omega^2\}$ act $K$-linearly on $S$ so as to send the images $x, y, z$ of $X, Y, Z$ to $x, \omega y, \omega z$. Let $R = S^G$ be the fixed ring of this action, which is generated over $K$ by $x, y^3, y^2z$, and $z^3$.

We obtained that $R$ is isomorphic to $K[[a,b,c,d]]/I$, where $I=(c^3-b^2d, a^3c-c^2-bd, a^3b-bc-c^2, a^6-c^2-2bd-cd)$.

Using Macaulay 2, we found three elements in $(I^{[7]}:I)\setminus m^{[7]}$ generating $(I^{[7]}:I)$. Since these elements have around 250 terms each, we will not list them here.

We ran the algorithm ~\cite{KS} for each one of these elements and we obtained that the maximal ideal is the only $\phi$-compatible prime ideal of $R$, where $\phi$ is the map associated to the respective element. Therefore, the test ideal of the pair $(R, \phi)$ is the maximal  ideal in each of the three cases. We conclude that $R$ is therefore $4$-tight.
\end{Exa}

\medskip
The reader should be aware that current algorithmic tools are sometimes insufficient to compute such strong test ideals. The algorithm implemented by Katzman and Schwede needs the map $\phi$ to be surjective (so $R$ needs to be $F$-pure), and, even in the presence of $F$-purity, the computer can fail to produce the generators of ideals of type $I\brq :I$.

\section{Stanley-Reisner rings}
Let $k$ be a perfect field of characteristic $p$, $S=k[[x_1,\ldots,x_n]]$ be the formal power series ring in $n$ variables over $k$ and $q=p^e$, for $e\geq 0$.
Let $I\leq S$ be a square-free monomial ideal in $S$ and $R=S/I$.

Let $V=[n]=\{1,\ldots,n\}$ and $\Delta$ a simplicial complex on the vertex set $V$ of dimension $d-1\geq 0$. We denote the number of $i$-dimensional faces of $\Delta$ by $f_i$. We have $f_0=n$ and $f_{-1}=1$, since the empty face is a face of dimension $-1$ of any non-empty simplicial complex.

The $d$-tuple
$$f(\Delta) = (f_0, f_1,\ldots, f_{d-1})$$
is called the \textbf{\textit{$f$-vector of $\Delta$}}.
Let $f_{max}(\Delta)$ be the number of facets of the simplicial complex $\Delta$.

We can associate to the ideal $I$ a simplicial complex $\Delta$ on $[n]$ such that $\Delta$ contains a face $F$ if and only if $x_F\notin I$, where $x_F=\prod_{i\in F} x_i$ and $I=(x_F: F\notin \Delta)$.

\begin{Rem}
Let $\Delta$ be a simplicial complex on $[n]$. Then the primary decomposition of the Stanley-Reisner ideal associated to $\Delta$ is given by
$$I_{\Delta}=\bigcap_{F\in\mathcal{F}(\Delta)}P_{F^{c}},$$
where $\mathcal{F}(\Delta)$ is the set of the facets of $\Delta$, $F^{c}=[n]\setminus F$ and $P_F=(x_i: i\in F)$. 
\end{Rem}
Let $\alpha=(\alpha_1,\ldots,\alpha_n)\in \{0,1\}^{n}$. We define the support of $\alpha$ as $supp(\alpha)=\{i : \alpha_i\neq 0\}$.

Let $S=k[[x_1,\ldots,x_n]]$, where $k$ is a perfect field of characteristic $p$ and $q=p^e$ for $e\geq 0$. Then, $S^{1/q}$ is a free $S$-module with basis $\left\{x_1^{\lambda_1/q}\cdots x_n^{\lambda_n/q}\right\}_{0\leq \lambda_i\leq q-1}$. The map $\Phi_S: S^{1/q}\to S$ that sends the element $x_1^{{(q-1)}/q}\cdots x_n^{{(q-1)}/q}$ to $1$ and all the other basis elements to zero is called \textbf{\textit{the trace map}}.

\begin{Rem}
\label{trace}
Let $S=k[[x_1,\ldots,x_n]]$, where $k$ is a perfect field of characteristic $p$ and $q=p^e$ for $e\geq 0$. Then $\Hom{S}{S^{1/q}}{S}$ is a free $S^{1/q}$-module with generator $\Phi_S$. Therefore, for every $S$-linear map $\Phi: S^{1/q}\to S$, there is $z \in S$ such that $\Phi(s)=\Phi_S(z^{1/q}s)$, for every $s\in S^{1/q}$.
\end{Rem}
\begin{Thm}[Fedder's Lemma, cf. Lemma 1.6 in~\cite{Fed}]
\label{f}

Let $S=k[[x_1,\ldots,x_n]]$, where $k$ is a perfect field and $R=S/I$ for some ideal $I\leq S$. If $\phi: R^{1/q} \to R$ is any $R$-linear map, then there exists an $S$-linear map $\Phi: S^{1/q} \to S$ which is compatible with $I$ such that $\phi=\Phi/I$.

Moreover,
$\phi$ is surjective if and only if $z\notin \fm^{[q]}$, where $\Phi(s)=\Phi_S(z^{1/q}s)$ and $\Phi_S$ is the trace map on $S$. 
Furthermore, there exists an isomorphism 
$$\Hom{R}{R^{1/q}}{R}\cong \displaystyle\frac{I^{[q]}:I}{I^{[q]}}.$$
\end{Thm}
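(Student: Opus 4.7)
The plan is to leverage the identification recalled in Remark~\ref{trace}: $\Hom{S}{S^{1/q}}{S}$ is a free rank-one $S^{1/q}$-module generated by the trace map $\Phi_S$, so every $S$-linear map $\Phi\colon S^{1/q}\to S$ has the form $\Phi_z(s)=\Phi_S(z^{1/q}s)$ for a unique $z\in S$. The goal is to pin down exactly which $z$'s yield maps descending to $R^{1/q}\to R$, and then to read off surjectivity from the representative $z$.

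Apply $\Hom{S}{-}{R}$ to the short exact sequence $0\to I^{1/q}\to S^{1/q}\to R^{1/q}\to 0$. Since $S^{1/q}$ is finitely generated free over $S$, $\Hom{S}{S^{1/q}}{R}\cong S^{1/q}/I\cdot S^{1/q}$, and $\Hom{R}{R^{1/q}}{R}$ is identified with the submodule of classes $\overline{z^{1/q}}$ such that $\Phi_z(I^{1/q})\subseteq I$. This immediately yields the first assertion: given $\phi$, pick a representative $z\in S$ of its class; then $\Phi:=\Phi_z$ is $S$-linear, compatible with $I$, and induces $\phi$ modulo $I$.

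The crux is the equivalence
$$\Phi_z(I^{1/q})\subseteq I \iff z\in I^{[q]}:I,$$
which rests on the identity $I\cdot S^{1/q}=(I^{[q]})^{1/q}$ (from taking $q$th roots of the generators of $I^{[q]}$). For the forward direction, if $zf\in I^{[q]}$ for every $f\in I$, then $(zf)^{1/q}\in I\cdot S^{1/q}$ and, for every $u\in S^{1/q}$, $\Phi_z(u f^{1/q})=\Phi_S\bigl(u(zf)^{1/q}\bigr)\in \Phi_S(I\cdot S^{1/q})=I\Phi_S(S^{1/q})=I$. Conversely, if $\Phi_z(I^{1/q})\subseteq I$, then for each $f\in I$ the $S$-linear map $u\mapsto \Phi_z(f^{1/q}u)=\Phi_{zf}(u)$ from $S^{1/q}$ to $S$ has image in $I$; reapplying the rank-one identification gives $(zf)^{1/q}\in I\cdot S^{1/q}$, hence $zf\in I^{[q]}$. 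Combining with the fact that $\Phi_z$ vanishes modulo $I$ iff $z\in I^{[q]}$ yields the isomorphism $\Hom{R}{R^{1/q}}{R}\cong (I^{[q]}:I)/I^{[q]}$.

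For surjectivity, $\phi$ is surjective iff $\Phi_z(S^{1/q})+I=S$, which, since $I\subseteq \fm$, holds iff $\Phi_z(S^{1/q})\not\subseteq \fm$. The same rank-one identification applied with $\fm$ in place of $I$ shows that $\Phi_z(S^{1/q})\subseteq \fm$ iff $z^{1/q}\in \fm\cdot S^{1/q}=(\fm^{[q]})^{1/q}$, i.e.\ $z\in\fm^{[q]}$; hence $\phi$ is surjective precisely when $z\notin\fm^{[q]}$. The main obstacle I anticipate is the converse direction of the key equivalence: one must notice that for each $f\in I$ the map $u\mapsto \Phi_z(f^{1/q}u)$ is itself of the form $\Phi_{zf}$, so that the rank-one identification can be invoked a second time—now with target $I$ rather than $0$—to extract $zf\in I^{[q]}$.
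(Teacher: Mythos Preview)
The paper does not supply its own proof of this statement; it is quoted as Fedder's Lemma with a citation to \cite{Fed}, and the text proceeds directly to Proposition~\ref{b}. Your argument is a correct and essentially standard derivation: the identification $\Hom{S}{S^{1/q}}{S}\cong S^{1/q}$ via $\Phi_S$, the equality $I\cdot S^{1/q}=(I^{[q]})^{1/q}$, and the two applications of the rank-one identification (first to characterize compatibility, then to characterize vanishing modulo $I$ or $\fm$) are exactly the ingredients in Fedder's original proof, so there is nothing to contrast.
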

\begin{Prop}[Proposition 3.2 in~\cite{Boix}]
\label{b}
Let $r$ be a positive integer. Consider  $\alpha_k=(\alpha_{k1},\ldots,\alpha_{kn})\in \{0,1\}^n$ and $I_{\alpha_k}=(x_i: i\in supp(\alpha_k))$, for every $1\leq k\leq r$. Assume that $\alpha_k$, $ 1\leq k \leq r$, are distinct vectors.  Set $I = I_{\alpha_1}\cap I_{\alpha_2}\cap\cdots\cap I_{\alpha_r}$. Then
$$(I^{[q]}:_S I)= (I_{\alpha_1}^{[q]}+(x^{\alpha_1})^{q-1})\cap\cdots\cap(I_{\alpha_r}^{[q]
}+(x^{\alpha_r})^{q-1}),$$
where $x^{\alpha_k}=x_1^{\alpha_{k1}}\cdots x_n^{\alpha_{kn}}$.
\end{Prop}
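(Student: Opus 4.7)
The plan is to prove this identity by working entirely at the level of monomials, which is possible since every ideal appearing in the statement is monomial. Writing $S_k:=\operatorname{supp}(\alpha_k)$, my first step would be to verify that Frobenius commutes with intersection for these squarefree monomial primes, namely
$$I^{[q]} \;=\; \bigcap_{k=1}^{r} I_{\alpha_k}^{[q]}.$$
A monomial $\prod_i x_i^{b_i}$ lies in the right-hand side iff for every $k$ some $b_i$ with $i \in S_k$ is at least $q$, i.e.\ $\{i : b_i \geq q\}$ contains a transversal of $\{S_1,\ldots,S_r\}$; since the minimal transversals are exactly the supports of the monomial generators of $I$, this membership is equivalent to membership in $I^{[q]}$. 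Each $I_{\alpha_k}^{[q]}$ being $I_{\alpha_k}$-primary, the display above is in fact a primary decomposition of $I^{[q]}$.

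The general colon formula $\bigl(\bigcap_k J_k\bigr):_S I=\bigcap_k (J_k :_S I)$ then reduces the claim to the single-prime identity
$$I_{\alpha_k}^{[q]}:_S I \;=\; I_{\alpha_k}^{[q]} + (x^{\alpha_k})^{q-1} \qquad(*)$$
for each $k$. The inclusion $\supseteq$ in $(*)$ is direct: any $g\in I\subseteq I_{\alpha_k}$ can be written $g=\sum_{i\in S_k}h_ix_i$, and $(x^{\alpha_k})^{q-1}\cdot x_i=x_i^q\cdot\prod_{j\in S_k\setminus\{i\}}x_j^{q-1}$ lies in $I_{\alpha_k}^{[q]}$, so $(x^{\alpha_k})^{q-1}g$ does too.

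The hard direction of $(*)$ is $\subseteq$. Let $m$ be a monomial in $I_{\alpha_k}^{[q]}:_S I$ with $m\notin I_{\alpha_k}^{[q]}$, so $x_j^q\nmid m$ for every $j\in S_k$; I must show $x_i^{q-1}\mid m$ for each $i\in S_k$. Fixing such an $i$, the plan is to exhibit a monomial $g_i\in I$ whose support meets $S_k$ in exactly $\{i\}$, and only to the first power: for each $j\neq k$ pick $\ell_j\in S_j\setminus S_k$ and set $g_i:=x_i\prod_{j\neq k}x_{\ell_j}$, which lies in every $I_{\alpha_j}$ and hence in $I$. Then $mg_i\in I_{\alpha_k}^{[q]}$ forces $x_j^q\mid mg_i$ for some $j\in S_k$; since $x_i$ is the only $S_k$-variable in $g_i$ and appears to the first power, and $x_j^q\nmid m$ by hypothesis, we must have $j=i$, which in turn forces $x_i^{q-1}\mid m$.

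The main obstacle is the construction of $g_i$, which requires, for each $j\neq k$, choosing an index in $S_j\setminus S_k$. This is available precisely when the supports $S_k$ are pairwise incomparable, i.e.\ when the decomposition $I=\bigcap_k I_{\alpha_k}$ is irredundant; that hypothesis is automatic in the Stanley-Reisner situation driving this section, where the $\alpha_k$ correspond to the complements of the facets of $\Delta$. Once $(*)$ is established for each $k$, intersecting over $k$ yields the stated formula.
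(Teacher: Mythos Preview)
The paper does not prove this proposition; it is quoted verbatim from \cite{Boix} (\'Alvarez Montaner--Boix--Zarzuela) and used as a black box. So there is no ``paper's own proof'' to compare against here.

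Your argument itself is sound. The three steps --- showing $I^{[q]}=\bigcap_k I_{\alpha_k}^{[q]}$ via the transversal description of the generators of $I$, distributing the colon across the intersection, and then proving the single-prime identity $(*)$ --- are exactly the natural decomposition, and your verification of $(*)$ is correct. The construction of the test monomial $g_i=x_i\prod_{j\neq k}x_{\ell_j}$ with $\ell_j\in S_j\setminus S_k$ is the right idea, and your conclusion that $x_i^{q-1}\mid m$ follows cleanly.

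You are also right to flag the hypothesis issue. As literally stated in the paper (the $\alpha_k$ merely \emph{distinct}), the formula is false: with $n=2$, $S_1=\{1\}$, $S_2=\{1,2\}$ one gets $I=(x_1)$ and $I^{[q]}:I=(x_1^{q-1})$, whereas the right-hand side is $(x_1^{q-1})\cap(x_1^q,x_2^q,(x_1x_2)^{q-1})=(x_1^q,(x_1x_2)^{q-1})$. The correct hypothesis, implicit in \cite{Boix} and in every application in this section, is that the $I_{\alpha_k}$ form an \emph{irredundant} primary decomposition (equivalently, the supports $S_k$ are pairwise incomparable), which is automatic when they are the minimal primes of a Stanley--Reisner ideal. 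With that in place your proof is complete.
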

\begin{Cor}\label{surj}

Let $I\subseteq S$ be a square-free monomial ideal and $R=S/I$.
Then, $z=(\prod_{i=1}^n x_i)^{q-1}\in (I^{[q]}:_S I)\setminus \fm^{[q]}$. Therefore, $z=(\prod_{i=1}^n x_i)^{q-1}$ defines an $R$-linear surjective map  $\phi: R^{1/q}\to R$, $\phi=\Phi/I$ with $\Phi(s)=\Phi_S((\prod_{i=1}^n x_i)^{q-1/q}s)$, for all $s\in S^{1/q}$.
\end{Cor}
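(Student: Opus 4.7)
The plan is to verify the two inclusions that the statement requires, $z\in(I^{[q]}:_S I)$ and $z\notin \fm^{[q]}$, and then invoke Fedder's Lemma (Theorem~\ref{f}) to deduce surjectivity of the resulting map $\phi$.

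For the first inclusion, I would apply Proposition~\ref{b}. Since $I$ is a square-free monomial ideal, its minimal primary decomposition takes the form $I=I_{\alpha_1}\cap\cdots\cap I_{\alpha_r}$, where each $I_{\alpha_k}=(x_i:i\in\mathrm{supp}(\alpha_k))$ corresponds (via the Stanley--Reisner remark) to a facet $F_k$ of $\Delta$ with $\alpha_k$ the indicator vector of $F_k^c$. Then Proposition~\ref{b} gives
$$(I^{[q]}:_S I)=\bigcap_{k=1}^{r}\bigl(I_{\alpha_k}^{[q]}+(x^{\alpha_k})^{q-1}\bigr).$$
So it suffices to show $z=\prod_{i=1}^{n}x_i^{q-1}$ lies in each summand $I_{\alpha_k}^{[q]}+(x^{\alpha_k})^{q-1}$. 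This is immediate from the factorization
$$\Bigl(\prod_{i=1}^{n}x_i\Bigr)^{q-1}=(x^{\alpha_k})^{q-1}\cdot\prod_{i\notin\mathrm{supp}(\alpha_k)}x_i^{q-1},$$
which exhibits $z$ as an $S$-multiple of $(x^{\alpha_k})^{q-1}$.

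For the second inclusion, observe that $\fm^{[q]}=(x_1^q,\ldots,x_n^q)$ consists (as a monomial ideal) of those monomials with at least one exponent $\geq q$. The monomial $z=\prod_{i=1}^{n}x_i^{q-1}$ has every exponent equal to $q-1<q$, so $z\notin\fm^{[q]}$.

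Finally, Theorem~\ref{f} produces from any $z\in(I^{[q]}:_S I)$ an $S$-linear map $\Phi:S^{1/q}\to S$ via $\Phi(s)=\Phi_S(z^{1/q}s)$ that is compatible with $I$, and the induced map $\phi=\Phi/I:R^{1/q}\to R$ is surjective exactly when $z\notin\fm^{[q]}$. Both conditions having been verified, the Corollary follows. I do not anticipate a serious obstacle here; the content is really a bookkeeping reduction to Boix's formula and Fedder's criterion, and the one subtlety is only to recognize that $(x^{\alpha_k})^{q-1}$ is a divisor of $(\prod_i x_i)^{q-1}$ for every $k$, which holds because $\mathrm{supp}(\alpha_k)\subseteq[n]$.
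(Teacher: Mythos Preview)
Your proof is correct and follows essentially the same approach as the paper: both arguments invoke Proposition~\ref{b} to describe $(I^{[q]}:_S I)$ as an intersection $\bigcap_k\bigl(I_{\alpha_k}^{[q]}+(x^{\alpha_k})^{q-1}\bigr)$, check that $z=(\prod_i x_i)^{q-1}$ lies in each component, observe that $z\notin\fm^{[q]}$ by inspection of exponents, and finish with Fedder's Lemma. The only cosmetic difference is that the paper routes the first inclusion through the intermediate element $\mathrm{lcm}\bigl((x^{\alpha_1})^{q-1},\ldots,(x^{\alpha_r})^{q-1}\bigr)$, whereas you check directly that $(x^{\alpha_k})^{q-1}$ divides $z$ for each $k$; these amount to the same observation.
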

\begin{proof}


Since $I$ is a square-free monomial ideal, the minimal primary decomposition of $I$ can be written as $I = I_{\alpha_1}\cap I_{\alpha_2}\cap\cdots\cap I_{\alpha_r}$, where $\alpha_k=(\alpha_{k1},\ldots,\alpha_{kn})\in \{0,1\}^n$, $1\leq k\leq r$, are distinct vectors, and $I_{\alpha_k}=(x_i: i\in supp(\alpha_k))$, for every $1\leq k\leq r$. 

 By using Proposition~\ref{b}, we obtain that $lcm((x^{\alpha_1})^{q-1},(x^{\alpha_2})^{q-1},\ldots,(x^{\alpha_r})^{q-1})$ is an element contained in $(I^{[q]}:_S I)$ that is not in $\fm^{[q]}$.

But $lcm((x^{\alpha_1})^{q-1},(x^{\alpha_2})^{q-1},\ldots,(x^{\alpha_r})^{q-1})$ divides $(\prod_{i=1}^n x_i)^{q-1}$ because $x^{\alpha_1},\ldots, x^{\alpha_r}$ are square-free monomials. Hence, $(\prod_{i=1}^n x_i)^{q-1}\in (I^{[q]}:I)\setminus \fm^{q}$.

Therefore, by Theorem~\ref{f} the $R$-linear map $\phi: R^{1/q}\to R$, given by $z=(\prod_{i=1}^n x_i)^{q-1}$ is a surjective map. 
\end{proof}
\begin{Prop}[Corollary 1.5 in~\cite{BK}]\label{bk}
Let $\Phi: S^{1/q}\to S$ be an $S$-linear map and $z\in S$ such that $\Phi(s)=\Phi_S(z^{1/q}s)$, for every $s\in S^{1/q}.$ Let $J\subseteq S$ be an ideal in $S$. Then $J$ is $\Phi$-compatible if and only if $J\subseteq (J^{[q]}:_S z)$.
\end{Prop}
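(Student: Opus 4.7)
The plan is to reduce both directions to a single lemma on the trace map: for any $w\in S$ and any ideal $J\leq S$, one has $w\in J^{[q]}$ if and only if $\Phi_S(y\cdot w^{1/q})\in J$ for every $y\in S^{1/q}$. Granting this lemma, the proposition follows quickly: $\Phi$-compatibility of $J$ says $\Phi(j^{1/q})=\Phi_S((zj)^{1/q})\in J$ for every $j\in J$. Replacing $j$ by $uj$ (still in $J$) for arbitrary $u\in S$ gives $\Phi_S(u^{1/q}(zj)^{1/q})\in J$, and since $\{u^{1/q}:u\in S\}=S^{1/q}$ (each $y\in S^{1/q}$ satisfies $y^q\in S$ and $y=(y^q)^{1/q}$), we obtain $\Phi_S(y(zj)^{1/q})\in J$ for every $y\in S^{1/q}$ and every $j\in J$. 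The lemma then forces $zj\in J^{[q]}$ for all $j\in J$, i.e., $zJ\subseteq J^{[q]}$, which is exactly $J\subseteq(J^{[q]}:_S z)$. Conversely, if $J\subseteq(J^{[q]}:_S z)$, then for each $j\in J$ we have $zj\in J^{[q]}$, hence by the lemma $\Phi_S((zj)^{1/q})=\Phi(j^{1/q})\in J$, so $J$ is $\Phi$-compatible.

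To prove the lemma, the forward implication is a direct computation: writing $w=\sum_i s_i f_i^q$ with $f_i\in J$ and using that Frobenius roots are additive and multiplicative in characteristic $p$, one gets $w^{1/q}=\sum_i s_i^{1/q}f_i$, and $S$-linearity of $\Phi_S$ yields $\Phi_S(yw^{1/q})=\sum_i f_i\,\Phi_S(ys_i^{1/q})\in J$. For the reverse implication, use freeness of $S$ over $S^q$ and write $w=\sum_\lambda a_\lambda^q x^\lambda$ uniquely over $\lambda\in\{0,\ldots,q-1\}^n$, so that $w^{1/q}=\sum_\lambda a_\lambda x^{\lambda/q}$. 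Testing with $y=x^{\mu/q}$ for each $\mu\in\{0,\ldots,q-1\}^n$, re-expand $x^{\mu/q}w^{1/q}=\sum_\lambda a_\lambda x^{(\lambda+\mu)/q}$ in the $S$-basis $\{x^{\nu/q}:0\le\nu_i\le q-1\}$ of $S^{1/q}$. Since $0\le \lambda_i+\mu_i\le 2q-2$, the only way a summand contributes to the trace is $\lambda_i+\mu_i=q-1$ for each $i$; hence $\Phi_S(x^{\mu/q}w^{1/q})=a_{(q-1,\ldots,q-1)-\mu}$, which, together with $S$-linearity of $\Phi_S$ (to reduce the case of general $y\in S^{1/q}$ to the monomial case), forces every $a_\lambda\in J$ and therefore $w\in J^{[q]}$.

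The main technical step is the indexing argument in the second paragraph, establishing that the only $\lambda$ contributing to $\Phi_S(x^{\mu/q}w^{1/q})$ is $\lambda=(q-1,\ldots,q-1)-\mu$. Once this bijection is verified, the rest amounts to unpacking the definition of $\Phi$-compatibility and repeatedly using $S$-linearity of $\Phi_S$ together with the fact that raising to the $q$-th power and taking $q$-th roots are ring homomorphisms in characteristic $p$.
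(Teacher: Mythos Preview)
The paper does not supply its own proof of this proposition; it is quoted as Corollary~1.5 of Boix--Katzman and used as a black box. Your argument is correct and is essentially the standard one: it unwinds $\Phi$-compatibility via the trace map, using that $S^{1/q}$ is free over $S$ with basis $\{x^{\lambda/q}\}_{0\le\lambda_i\le q-1}$ and that $\Phi_S$ is the projection onto the $x^{(q-1,\ldots,q-1)/q}$-coordinate. Two minor remarks: in the reverse direction of your lemma you do not actually need to ``reduce general $y$ to the monomial case'' by $S$-linearity---the hypothesis already gives $\Phi_S(yw^{1/q})\in J$ for every $y$, so you may simply plug in $y=x^{\mu/q}$ directly and read off $a_{(q-1,\ldots,q-1)-\mu}\in J$; and in the final step you might state explicitly that $a_\lambda\in J$ implies $a_\lambda^q\in J^{[q]}$ (immediate in characteristic $p$), so that $w=\sum_\lambda a_\lambda^q x^\lambda\in J^{[q]}$.
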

\begin{Def}
Let $K\subseteq S$ be an ideal in $S$ and $q=p^e$, for $e\geq 0$. Then $I_e(K)$ denotes the smallest ideal $I$ such that $I^{[q]}\supseteq K$. The ideal $I_e(K)$ is called the $e${\it-th root ideal} of $K$. 
\end{Def}
We have that the following elementary properties of the $e$-th root ideals hold.
\begin{Prop}[Proposition 1.3 in~\cite{BK}]
\label{root}
Let $K_1,\ldots, K_s\subseteq S$ ideals in $S$. Then the following statements hold:
\item[(a)]
$I_e(\displaystyle\sum_{i=1}^s K_i)=\displaystyle\sum_{i=1}^s I_e(K_i);$
\item[(b)]
Let $h\in S$ and write
$$h=\sum_{0\leq a_1,\ldots,a_n \leq q-1, \  a=(a_1, \ldots, a_n)} h_{a}^{q}x_1^{a_1}\cdots x_n^{a_n}.$$
Then $I_e(h)$ is the ideal generated in $S$ by the elements $h_{a}$ appearing in the expression above.
\end{Prop}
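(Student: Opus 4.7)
The plan is to prove (a) and (b) separately, using only the minimality in the definition of $I_e$, the identity $(I+J)^{[q]} = I^{[q]} + J^{[q]}$, and the fact that, for $k$ perfect, $S$ is free over $S^{[q]}=k[[x_1^q,\ldots,x_n^q]]$ on the basis $\{x^a : 0\leq a_i \leq q-1\}$, so that every $h \in S$ has a unique expansion $h = \sum_a h_a^q x^a$ with $h_a \in S$.

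For part (a), setting $I_i := I_e(K_i)$ we have $\bigl(\sum_i I_i\bigr)^{[q]} = \sum_i I_i^{[q]} \supseteq \sum_i K_i$, so minimality gives $I_e(\sum_i K_i) \subseteq \sum_i I_e(K_i)$. Conversely, any ideal $I$ with $I^{[q]} \supseteq \sum_i K_i$ satisfies $I^{[q]} \supseteq K_j$ for each $j$, hence $I \supseteq I_e(K_j)$ by minimality, so $I \supseteq \sum_i I_e(K_i)$; applying this with $I = I_e(\sum_i K_i)$ yields the reverse inclusion.

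For part (b), set $J := (h_a : 0 \leq a_i \leq q-1)$. The easy direction is $I_e(h) \subseteq J$: the relation $h = \sum_a h_a^q x^a$ exhibits $h \in J^{[q]}$, and minimality delivers the containment. For $J \subseteq I_e(h)$ it suffices to show that every ideal $I$ with $h \in I^{[q]}$ contains $J$. Writing $h = \sum_j s_j f_j^q$ with $f_j \in I$ and $s_j \in S$, and expanding each $s_j = \sum_a t_{j,a}^q x^a$ in the free $S^{[q]}$-basis, one uses the characteristic-$p$ identity $\sum_j (t_{j,a} f_j)^q = \bigl(\sum_j t_{j,a} f_j\bigr)^q$ to rewrite $h = \sum_a \bigl(\sum_j t_{j,a} f_j\bigr)^q x^a$. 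Uniqueness of the basis expansion then forces $h_a = \sum_j t_{j,a} f_j \in I$ for every $a$, as desired.

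The main technical point is this reverse inclusion in (b), where one must juggle two facts at once: uniqueness of the $S^{[q]}$-basis expansion and Frobenius-additivity on sums. Perfection of $k$ is essential to both: it is what ensures that each $h_a$ exists in $S$ (i.e., that every element of $S^{[q]}$ really is a $q$-th power in $S$), and what makes $S^{[q]}$ equal to $k[[x_1^q,\ldots,x_n^q]]$ so that the monomial basis is available. Everything else reduces to formal manipulation with the minimality property in the definition of $I_e$.
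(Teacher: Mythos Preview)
Your argument is correct. The paper does not supply its own proof of this proposition; it is simply quoted from \cite{BK}, so there is nothing to compare against and your write-up stands on its own as a complete justification.
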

\begin{Prop}\label{comp}
Let $S=k[[x_1,\ldots,x_n]]$, where $k$ is a perfect field of characteristic $p$. Let $\Phi: S^{1/q}\to S$ be given by $\Phi(s)=\Phi_S(z^{1/q}s)$, for every $s\in S^{1/q}$ and $z=(\prod_{i=1}^n x_i)^{q-1}$.  The set of $\Phi$-compatible prime ideals consists of the set of ideals generated by variables, that is $(x_{i_1},\ldots, x_{i_k})$, where $1\leq i_1,\ldots,i_k\leq n$.
\end{Prop}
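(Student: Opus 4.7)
My plan is to establish the description in two directions. For \emph{sufficiency}, suppose $\fp = (x_{i_1},\ldots,x_{i_k})$ for some subset of the variables. The quotient $S/\fp$ is a formal power series ring in the remaining variables, hence a domain, so $\fp$ is prime. For each generator, $z \cdot x_{i_j} = x_{i_j}^q \prod_{\ell \neq i_j} x_\ell^{q-1} \in (x_{i_j}^q) \subseteq \fp^{[q]}$, and therefore $z \fp \subseteq \fp^{[q]}$. Proposition~\ref{bk} then gives that $\fp$ is $\Phi$-compatible.

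For \emph{necessity}, I would argue by induction on $n$, the base case $n=0$ being trivial. \emph{Step A} (some variable lies in a nonzero compatible prime): assume $\fp \neq 0$ and, toward a contradiction, that no $x_i$ belongs to $\fp$. Then $z = (\prod x_i)^{q-1} \notin \fp$, so $z$ becomes a unit after localizing at $\fp$. The containment $z\fp \subseteq \fp^{[q]}$ therefore localizes to
$$\fm_\fp \ \subseteq \ \fp^{[q]} S_\fp \ = \ (\fp S_\fp)^{[q]} \ = \ \fm_\fp^{[q]} \ \subseteq \ \fm_\fp^q,$$
where $\fm_\fp = \fp S_\fp$ is the maximal ideal of $S_\fp$. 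Taking $q$-th powers of the containment $\fm_\fp \subseteq \fm_\fp^{q}$ repeatedly yields $\fm_\fp \subseteq \fm_\fp^{q^m}$ for every $m \geq 1$, and Krull's intersection theorem in the Noetherian local ring $S_\fp$ forces $\fm_\fp = 0$. Hence $\fp$ is a minimal prime of the domain $S$, so $\fp = 0$, a contradiction.

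\emph{Step B} (induction): after relabeling we may assume $x_1 \in \fp$. Set $S'' := S/(x_1) \cong k[[x_2,\ldots,x_n]]$ and $\fp'' := \fp/(x_1) \in \Spec S''$. Since $(x_1)$ is itself $\Phi$-compatible (sufficiency), $\Phi$ descends to a well-defined $S''$-linear map $\bar\Phi:(S'')^{1/q} \to S''$. I would verify, by unfolding $\Phi(t^{1/q}) = \Phi_S((zt)^{1/q})$ for $t \in k[[x_2,\ldots,x_n]]$ and extracting the coefficient of $x_1^{(q-1)/q} x_2^{(q-1)/q}\cdots x_n^{(q-1)/q}$ in its basis expansion, that $\bar\Phi = \Phi_{S''}((z'')^{1/q} \cdot)$ with $z'' = (\prod_{i\geq 2} x_i)^{q-1}$. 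In other words, $\bar\Phi$ is precisely the analogous map on the power series ring in one fewer variable. Since $\fp''$ is $\bar\Phi$-compatible in $S''$, the inductive hypothesis gives $\fp'' = (x_{j_1},\ldots,x_{j_\ell})$ for some subset of $\{x_2,\ldots,x_n\}$, and consequently $\fp = (x_1, x_{j_1}, \ldots, x_{j_\ell})$.

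The main obstacle will be the descent identification $\bar\Phi = \Phi_{S''}((z'')^{1/q}\cdot)$ in Step B: this requires tracking which terms of $t$ survive under $\Phi_S$ after multiplication by $z^{1/q}$, and checking that the only nonzero contributions come from monomials whose $x_i$-exponents ($i \geq 2$) are divisible by $q$---which is exactly what the trace map on $S''$ picks up. Step A is cleaner once one localizes at $\fp$ to turn $z$ into a unit; after that, Krull's theorem in $S_\fp$ does the rest.
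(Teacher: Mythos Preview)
Your argument is correct, and both directions go through as you outline. The sufficiency half matches the paper exactly (via Proposition~\ref{bk}). For necessity, however, your route is genuinely different from the paper's.

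The paper argues directly that any $\Phi$-compatible prime $J$ is a monomial ideal, hence generated by variables. It takes $f\in J$, decomposes $f=\sum_i f_i$ into monomials, and uses the $e$th root ideal: since $zf\in J^{[q]}$ one has $I_e(zf)\subseteq J$, while Proposition~\ref{root} gives $I_e(zf)=\sum_i I_e(zf_i)$ and $f_i\in I_e(zf_i)$ (the latter because multiplying a monomial by $(x_1\cdots x_n)^{q-1}$ and then taking the $e$th root returns a monomial dividing the original one). Thus each $f_i\in J$. This is a one-step combinatorial argument that leans on the root-ideal machinery already set up in the paper.

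Your approach instead proceeds by induction on $n$: you localize at $\fp$ to force some variable into $\fp$ via Krull's intersection theorem, and then descend to $S/(x_1)$ after checking that the induced map is again of the same shape with $z''=(\prod_{i\ge 2}x_i)^{q-1}$. This avoids root ideals entirely and is more structural; the price is the bookkeeping in Step~B to identify $\bar\Phi$ with the trace-twisted map on $S''$ (which, as you note, amounts to observing that the $x_1$-exponent in $z$ is exactly $q-1$, so the trace condition on $x_1$ is automatically met and the remaining trace reduces to that of $S''$). Both proofs are short; the paper's is marginally quicker because the root-ideal lemmas are already available, while yours would transplant more easily to settings where those lemmas are not at hand.
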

\begin{proof}
In order to see that the ideals generated by variables are $\Phi$-compatible we will use Proposition ~\ref{bk}. For example, if we consider the ideal $(x_{i_1},\ldots,x_{i_k})$, it is easy to see that $(x_1\cdots x_n)^{q-1}(x_{i_1},\ldots,x_{i_k})\subseteq (x_{i_1}^q,\ldots,x_{i_k}^q)$. By using Proposition ~\ref{bk}, we obtained that $(x_{i_1},\ldots,x_{i_k})$ is $\Phi$-compatible.

On the other hand, we have to show that the ideals generated by variables are the only $\Phi$-compatible prime ideals. In order to prove this, it is enough to show that if an ideal, say $J$, is a prime $\Phi$-compatible ideal, then $J$ is monomial, since every prime monomial ideal is an ideal generated by variables.

Let $J$ be a $\Phi$-compatible prime ideal and let $f$ be a polynomial in $J$. We let $f=\sum_{i=1}^{r}f_i$ be the decomposition of $f$ as a sum of monomials. We have to show that each monomial component $f_i$ of $f$ is contained in $J$.

Since $zf\in J^{[q]}$, then $I_e(zf)\subseteq J$, where $z=(x_1\cdots x_n)^{q-1}$. But by Proposition ~\ref{root} (a), $I_e(zf)=\sum_{i=1}^r I_e(zf_i)$. Moreover, Proposition~\ref{root} (b) gives that $f_i\in I_e(zf_i)$, for $1\leq i\leq r$. Hence, each $f_i$ is contained in $J$. Therefore, $J$ is a monomial prime ideal.

To sum up, all the $\Phi$-compatible ideals are the ideals generated by variables.
\end{proof}
\begin{Prop}\label{tau}
Let $I\subseteq S$ be a square-free monomial ideal and $R=S/I$. Let $\phi: R^{1/q}\to R$ be the $R$-linear map given by $z=(\prod_{i=1}^n x_i)^{q-1}$, that is, $\phi=\Phi/I$ with $\Phi(s)=\Phi_S((\prod_{i=1}^n x_i)^{(q-1)/q}s)$, for all $s\in S^{1/q}$. Then the test ideal associated to the pair $(R,\phi)$ is given by
$$\tau(R,\phi)=(x_F: F\in\mathcal{F}(\Delta)),$$
where $\Delta$ is the simplicial complex associated to the ideal $I$.
\end{Prop}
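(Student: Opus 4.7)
The plan is to apply the algorithmic description of $\tau(R,\phi)$ developed in Section~2.3. Since $\phi$-compatible ideals are closed under primary decomposition and radicals by Proposition~3.2 in~\cite{KS}, the test ideal equals the intersection, taken modulo $I$, of the $\Phi$-compatible prime ideals of $S$ that contain $I$ and are not minimal over $I$. Proposition~\ref{comp} already classifies the $\Phi$-compatible primes of $S$ as precisely the ideals $P_G = (x_i : i \in G)$ for $G \subseteq [n]$, so everything reduces to picking out the right $G$ and intersecting.

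For the selection step I would use the primary decomposition $I_\Delta = \bigcap_{F \in \mathcal{F}(\Delta)} P_{F^c}$ recorded at the start of the section: a prime $P_G$ contains $I_\Delta$ if and only if $G \supseteq F^c$ for some facet $F$, equivalently $G^c \in \Delta$, and $P_G$ is minimal over $I_\Delta$ if and only if $G^c$ is itself a facet. So the primes to intersect are exactly $P_{H^c}$ as $H$ ranges over the non-facet faces of $\Delta$.

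The main step is then to compute $T := \bigcap_{H \in \Delta \setminus \mathcal{F}(\Delta)} P_{H^c}$ as a monomial ideal in $S$. A monomial $x^\alpha$ with support $A$ lies in $P_{H^c}$ if and only if $A \not\subseteq H$, so $x^\alpha$ lies in $T$ if and only if $A$ is contained in no non-facet face of $\Delta$. A brief case analysis yields exactly two possibilities: either $A \notin \Delta$, in which case $x^\alpha \in I_\Delta$; or $A$ is itself a facet of $\Delta$, in which case $x^\alpha$ is a multiple of $x_A$. The delicate case---and where I expect most care to be needed---is $A \in \Delta$ non-facet, where taking $H := A$ forces $x^\alpha \notin P_{H^c}$ and eliminates these ``middle'' monomials. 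Putting this together gives $T = I_\Delta + (x_F : F \in \mathcal{F}(\Delta))$, and reducing modulo $I$ yields the claimed equality $\tau(R,\phi) = (x_F : F \in \mathcal{F}(\Delta))$ in $R$.
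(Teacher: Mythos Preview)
Your proposal is correct and follows essentially the same route as the paper: identify the $\Phi$-compatible primes via Proposition~\ref{comp}, discard the minimal primes of $I$, intersect the remainder, and reduce modulo $I$. Your parametrization of the relevant primes as $P_{H^c}$ for non-facet faces $H$ and the support-based case analysis is slightly more direct than the paper's grouping $\bigcap_j (P_{F_j^c}, x_{F_j})$, but the two computations are equivalent and the overall strategy is the same.
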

\begin{proof}
Given $\phi:R^{1/q}\to R$ an $R$-linear map, there exists an $S$-linear map $\Phi: S^{1/q} \to S$ which is compatible with $I$ such that $\phi=\Phi/I$ by Theorem ~\ref{f}, where $\Phi(s)=\Phi_S(z^{1/q}s)$ and $\Phi_S$ is the trace map on $S$. Moreover, $\phi$ is surjective if and only if $z\notin \fm^{[q]}$, 

 But according to Corollary ~\ref{surj}, $z=(\prod_{i=1}^n x_i)^{q-1}$ defines an $R$-linear surjective map $\phi: R^{1/q}\to R$, that is, $\phi=\Phi/I$ with $\Phi(s)=\Phi_S((\prod_{i=1}^n x_i)^{(q-1)/q}s)$ for all $s\in S^{1/q}$.
Using Lemma 2.4 in ~\cite{KS}, we have that there is a bijective correspondence between the $\phi$-compatible ideals and the $\Phi$-compatible ideals containing $I$.

Proposition ~\ref{comp} gives the list of $\Phi$-compatible prime ideals. We want to compute $\tau(R,\phi)$, which is the smallest $\phi$-compatible ideal with respect to inclusion. Since, in an $F$-pure ring, the $\phi$-compatible ideals are closed under primary decomposition, we need to intersect all the $\phi$-compatible prime ideals. By Lemma 2.4 in ~\cite{KS}, to determine the list of all $\phi$-compatible prime ideals,  we first find the $\Phi$-compatible prime ideals that contain the ideal $I$. Then we remove the minimal primes of $I$ from the list given by Proposition ~\ref{comp}. After this, $\tau(R,\phi)$ is the image of the ideal obtained after intersecting all these remaining ideals modulo $I$.


Consider now the simplicial complex $\Delta$ associated to the ideal $I$. Let $\mathcal{F}(\Delta)=\{F_1,\ldots,F_m\}$ denote the set of facets of $\Delta$ and 
$$I=I_{\Delta}=\bigcap_{F\in\mathcal{F}(\Delta)}P_{F^{c}}$$
the primary decomposition of the ideal $I$.

So we have that the set of minimal primes of $I$ is $Min(I)=\{P_{F^{c}}\}$. Proposition ~\ref{comp} tells us that the set of $\Phi$-compatible prime ideals consists of all the ideals generated by variables. 
Hence, the set of $\Phi$-compatible prime ideals that contain $I$ and are not in the set of minimal primes of $I$ are the ideals 
$$(P_{F_j^c}, x_i: i\in F_j),$$
for every $1\leq j\leq m$.
Therefore, by intersecting them, we obtain 
$$\bigcap_{j=1}^m (P_{F_j^c}, \prod_{i\in F_j} x_i)=\bigcap_{j=1}^m (P_{F_j^c}, x_{F_j}).$$

Now, we obtain the test ideal $\tau(R,\phi)$ by taking the intersection $$\bigcap_{j=1}^m (P_{F_j^c}, x_{F_j})$$ modulo the ideal $I$.
Since $I=I_{\Delta}=(x_F: F\notin \Delta)$, all the monomials in the intersection $$\bigcap_{j=1}^m (P_{F_j^c}, x_{F_j})$$ are killed by modding out by the ideal $I$, except $x_{F_1},\ldots, x_{F_m}$.
Hence, 
 $$\tau(R,\phi)=(x_{F_1},\ldots, x_{F_m}).$$



\end{proof}
\begin{Cor}\label{facets}
 
Let $I\subseteq S$ be a square-free monomial ideal and $R=S/I$. Let $\phi: R^{1/q}\to R$ be the $R$-linear map given by $z=(\prod_{i=1}^n x_i)^{q-1}$, that is, $\phi=\Phi/I$ with $\Phi(s)=\Phi_S((\prod_{i=1}^n x_i)^{(q-1)/q}s)$ for all $s\in S^{1/q}$.

Then the test ideal associated to the pair $(R,\phi)$ is $f_{max}(\Delta)$-generated,
where $\Delta$ is the simplicial complex associated to the ideal $I$.

Therefore, in this ring, every element $x$ belonging to $I^*$ satisfies a degree $f_{max}(\Delta)$ equation of integral dependence over $I$.
\end{Cor}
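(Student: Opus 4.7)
The plan is to combine the explicit formula for $\tau(R,\phi)$ obtained in Proposition~\ref{tau} with the general principle that strong test ideals control the degrees of integral dependence equations. The result is almost a direct corollary: once we know the generators of $\tau(R,\phi)$, we only have to identify them as a minimal generating set and then invoke Theorem~\ref{main} together with Huneke's bound (Theorem 2.1 in~\cite{Hu}) cited in the introduction.

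First, I would apply Proposition~\ref{tau} to write
$$\tau(R,\phi) = (x_{F_1}, \ldots, x_{F_m}),$$
where $F_1, \ldots, F_m$ are the facets of $\Delta$, so that $m = f_{max}(\Delta)$. Next, I would verify that these $m$ generators are in fact a \emph{minimal} generating set of $\tau(R,\phi)$. The images $x_{F_1}, \ldots, x_{F_m}$ are nonzero in $R = S/I$ because each $F_j$ is a face of $\Delta$, hence $x_{F_j} \notin I = I_\Delta$. Moreover, by definition, facets are maximal faces, so no inclusion $F_i \subseteq F_j$ holds for $i \neq j$; equivalently, $x_{F_j}$ does not divide $x_{F_i}$ in $S$, and therefore no generator lies in the ideal generated by the others (a monomial relation among squarefree monomials in the local ring $R$ would force such a divisibility). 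By Nakayama's lemma applied to the local ring $(R,\fm)$, the minimal number of generators of $\tau(R,\phi)$ equals $f_{max}(\Delta)$.

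Finally, Theorem~\ref{main} guarantees that $\tau(R,\phi)$ is a strong test ideal in $R$. By Theorem 2.1 of~\cite{Hu}, recalled in the introduction, the minimal number of generators of any strong test ideal is an upper bound for the minimal degree of an integral dependence equation satisfied by an arbitrary element $x \in I^*$ over $I$. Combining this with the previous paragraph yields that every $x \in I^*$ satisfies an integral dependence equation over $I$ of degree at most $f_{max}(\Delta)$, which is the desired conclusion.

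There is essentially no substantive obstacle here; the only subtlety is confirming the minimality of the generating set of $\tau(R,\phi)$, which, as above, is forced by the combinatorial fact that facets are incomparable under inclusion. All the technical heavy lifting is already contained in Theorem~\ref{main} (the abundance of strong test ideals from Cartier algebras) and Proposition~\ref{tau} (the explicit identification of $\tau(R,\phi)$ for the canonical choice $z=(\prod x_i)^{q-1}$).
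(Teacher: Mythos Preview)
Your proposal is correct and follows exactly the route the paper intends: the corollary is stated without proof because it is immediate from Proposition~\ref{tau} (giving the explicit generators $x_{F_1},\dots,x_{F_m}$), Theorem~\ref{main} (showing $\tau(R,\phi)$ is a strong test ideal), and Huneke's Theorem~2.1 in~\cite{Hu}. Your added verification that the $x_{F_j}$ form a \emph{minimal} generating set is a nice refinement the paper leaves implicit; note, however, that Huneke's bound only requires \emph{some} generating set of the strong test ideal, so minimality is not strictly needed for the stated conclusion.
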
 

\
\bigskip
\section*{Acknowledgements} The authors thank Siang Ng and Yongwei Yao for useful comments and Robin Baidya for a careful reading of the manuscript and writing feedback.

\end{document}